
\documentclass[letterpaper, 10 pt]{IEEEtran}  

\IEEEoverridecommandlockouts                              

\overrideIEEEmargins                                      



\usepackage{amsmath}
\usepackage{amssymb}
\usepackage{amstext}

\usepackage{graphics}

\usepackage{algorithm}
\usepackage{algorithmic}

\usepackage{color}





\usepackage{subcaption}



\usepackage{tikz}
\usetikzlibrary{arrows}
\usepackage{verbatim}
\usetikzlibrary{positioning}


\usepackage{balance}





\usepackage[mathscr]{euscript}


\usepackage[normalem]{ulem}


\newcommand{\dist}{{\rm dist}}

\newcommand{\mR}{{\mathbb R}}
\newcommand{\mC}{{\mathbb C}}
\newcommand{\mD}{{\mathbb D}}
\newcommand{\mS}{{\mathbb S}}
\newcommand{\mT}{{\mathbb T}}

\newcommand{\Ltwo}{L_2}
\newcommand{\Linf}{L_\infty}

\newcommand{\Htwo}{\mathcal{H}_2}
\newcommand{\Hinf}{\mathcal{H}_\infty}

\newcommand{\HtwoC}{\Htwo(\mC_+)}
\newcommand{\HinfC}{\Hinf(\mC_+)}
\newcommand{\quotientfield}{\mathcal{F}(\Hinf)}

\newcommand{\absfuncwzero}{\phi_{\bar{\tau}}}
\newcommand{\outerfuncwzero}{W_{\bar\tau}}
\newcommand{\outerfuncwzerotilde}{\tilde W_{\bar\tau}}
\newcommand{\cut}[1]{{\rm cut}(#1)}

\newcommand{\real}{\text{Re}}
\newcommand{\imag}{\text{Im}}

\newcommand{\taumax}{\tau_{\rm max}}

\newcommand{\script}[1]{\EuScript{#1}}

\newcommand{\qed}{\hspace*{\fill}~\IEEEQED\par}

\newcommand{\rev}[1]{\textcolor{black}{#1}}

\newtheorem{thm}{Theorem}

\newtheorem{lemma}[thm]{Lemma}
\newtheorem{prop}[thm]{Proposition}

\newtheorem{remark}{Remark}

\graphicspath{{Figures/}{Figures/Delay/}{Figures/Gain_Phase/}}

\title{
An analytic interpolation approach to stability margins with emphasis on time delay
}

\author{Axel Ringh,~\IEEEmembership{Member,~IEEE}, Johan Karlsson,~\IEEEmembership{Senior Member,~IEEE}, and Anders Lindquist,~\IEEEmembership{Life Fellow,~IEEE}
\thanks{*This work was supported by the Swedish Research Council (VR),  grant 2014-5870, Knut and Alice Wallenberg foundation, grant KAW 2018.0349, the ACCESS Linnaeus Center at KTH, and SJTU-KTH cooperation grant.}
\thanks{A.~Ringh is with the Department of Electronic and Computer Engineering, The Hong Kong University of Science and Technology, Clear Water Bay, Kowloon, Hong Kong, China. {\tt\small eeringh@ust.hk}}
\thanks{
J.~Karlsson is with the Division of Optimization and Systems Theory, Department of Mathematics, KTH Royal Institute of Technology, 10044 Stockholm, Sweden. {\tt\small johan.karlsson@math.kth.se}}%
\thanks{A.~Lindquist
is with the Department of Automation and the School of Mathematics, Shanghai Jiao Tong University, Shanghai 200240, China, and also with the Department of Mathematics, KTH Royal Institute of Technology, 10044 Stockholm, Sweden. {\tt\small alq@math.kth.se}}
}

\setlength{\voffset}{4pt}


\begin{document}

\maketitle
\thispagestyle{empty}
\pagestyle{empty}

\begin{abstract}
Unlike the situation with gain and phase margins in robust stabilization, the problem to determine an exact maximum delay margin is still an open problem, although extensive work has been done to establish upper and lower bounds. The problem is that the corresponding constraints in the Nyquist plot are frequency dependent, and encircling the point $s=-1$ has to be done at sufficiently low frequencies, as the possibility to do so closes at higher frequencies. In this paper we present a new method for determining a sharper lower bound by introducing a frequency-dependent shift. The problem of finding such a bound simultaneously with gain and phase margin constraints is also considered. In all these problems we take an analytic interpolation approach.
\end{abstract}

\section{Introduction}

Stability margins are essentially metrics to tell how close a control system is to instability. In robust control design of linear time invariant (LTI) systems it is important to know how much a system, stabilized by feedback, can be perturbed so that  there is still an LTI controller that stabilizes the closed loop system. The manner in which the system is perturbed will lead to different types of stability margins, the most common of which are the gain margin and the phase margin. The problems to determine the maximum gain and phase margins are completely solved  \cite{doyle1992feedback,khargonekar1985non}.

The situation when the perturbation is a time delay is much more delicate and determining the {\em maximum delay margin} has remained an open problem. That is, to determine the largest time delay $\taumax$ such that for any $\bar\tau<\taumax$ there exists an LTI controller that stabilizes the time delay system for each delay in the interval $[0,\bar\tau]$.
\rev{However, removing the restriction that the controller be LTI, methods have been developed to design stabilizing controllers for any predefined, arbitrary large upper bound on the delay \cite{miller2005stabilization}, and for any predefined, arbitrarily large simultaneous gain and delay margin \cite{gaudette2014stabilizing}.}

Time delays are common in LTI systems and have been the subject of much study in systems and control; see, e.g.,  \cite{gu2003stability, michiels2007stability}, \cite{fridman2014introduction} and references therein. They may occur through communication delay, computational delay or physical transport delay.
In \cite{middleton2007achievable}  upper bounds of the maximum delay margin are presented  for some simple systems, but in general they are not tight. Methods for finding lower bounds have been proposed, e.g., by using robust control \cite{wang1994representation, huang2000robust}, integral quadratic constrains \cite{kao2007stability} (see also \cite{megretski1997system}), and analytic interpolation \cite{qi2014fundamental, qi2017fundamental,ringh2018lower}.

In the present paper we propose methods that builds on the analytic interpolation approaches in  \cite{qi2014fundamental, qi2017fundamental,ringh2018lower} for obtaining sharper lower bounds of the maximum delay margin. We do this in the context of the corresponding gain and phase margins, also considering the problem where maximum values of the gain and phase magins are  prescribed. 
\rev{More precisely,}
in Section~\ref{sec:delay_margin} we introduce the gain, phase and delay margins and show how they are related by illustrating them in the Nyquist plot \rev{and in the corresponding plot for the complementary sensitivity function}. In Section~\ref{sec:lower_bound_basic} we introduce the analytic interpolation approach in a preliminary form first presented in \cite{ringh2018lower}, modifying the method of  \cite{qi2014fundamental, qi2017fundamental}. Then we interpret it in the Nyquist setting. Section~\ref{sec:improved_method} is devoted to an improved method for finding a lower bound using frequency dependent shift. The controller design \rev{problem} is discussed in Section~\ref{sec:control_implementation}, and in Section~\ref{sec:homotopy} we introduce a heuristic for selecting the frequency dependent shift.
In Section~\ref{sec:sim_margins} the corresponding
optimization
problem for multiple stability margins is considered,
and in Section~\ref{sec:example} we provide some numerical examples. Finally, in Section~\ref{sec:conclusions} we provide some conclusions.

\section{Stability margins for linear systems}
\label{sec:delay_margin}

Consider the feedback control system depicted in Figure~\ref{fig:blockdiagram}, where $P$ is a transfer function of a continuous-time, finite-dimensional, single-input-single-output LTI system, $K$ is a feedback controller, and $\Delta$ is a potential perturbation of the system. The robust stabilization problem is to find a controller $K$ that stabilizes the feedback interconnection for all $\Delta$ in a prescribed class of functions $\Omega$. Moreover, this controller has to belong to the class 
\[
\quotientfield := \left\{ \frac{N(s)}{D(s)} \; \Big| \; N,D \in \HinfC \text{ and } D(s) \not \equiv 0 \right\},
\]
where $\mC_+$ denotes the open right half plane, and $\HinfC$ denote the Hardy space of bounded analytic functions on $\mC_+$; see, e.g., \cite{foias1996robust}. Also note that for $F \in \HinfC$ we have the norm $\| F(s) \|_{\Hinf} := \sup_{s \in \mC_+} |F(s)| = \sup_{\omega \in \mR} | F(i\omega) | =: \| F(i\omega) \|_{\Linf}$, see, e.g., \cite{foias1996robust, hoffman1962banach}.

Let us first consider the standard problem without a perturbation, i.e., where $\Delta(s) \equiv 1$. Then the closed loop system is input-output stable if
\begin{equation}
\label{eq:stability}
1 + P(s)K(s) \neq 0 \quad \text{for all $s \in \bar{\mC}_+$},
\end{equation}
where $\bar{\mC}_+$ is the closed right half plane, including $\infty$, cf. \cite{youla1974single, khargonekar1985non}. This is equivalent to that the complementary sensitivity function
\begin{equation}\label{eq:def_T}
T(s) := P(s)K(s) \big(1 + P(s)K(s) \big)^{-1}
\end{equation}
belongs to $\Hinf$ \cite{doyle1992feedback}.  The feedback system is {\em internally stable\/} if, in addition, there is no pole-zero cancellation between $P$ and $K$ in $\bar{\mC}_+$ \cite[pp.~36-37]{doyle1992feedback}, \cite[p.~13]{helton1998classical}. For simplicity we assume that the poles and the zeros are distinct, in which case the absence of pole-zero cancellations is equivalent to the interpolation conditions
\begin{subequations}\label{eq:interpolation}
\begin{align}
  &  T(p_j) = 1,\quad j = 1,\ldots, n , \\
  &  T(z_j)= 0,\quad j = 1,\ldots, m,
\end{align}
\end{subequations}
where $p_1, \ldots, p_n$  are the unstable poles and $z_1, \ldots, z_m$ the nonminimum phase zeros of $P$, respectively; see, e.g., \cite{youla1974single}, \cite[Ch.~2 and 7]{helton1998classical}. In the sequel we shall simply say that $K$ stabilizes $P$ when all these conditions are satisfied. If the poles and zeros are not distinct the interpolation conditions need to be imposed with multiplicity \cite{youla1974single}.
\begin{figure}[tb]
\tikzstyle{int}=[draw, minimum size=2em]
\tikzstyle{init} = [pin edge={to-,thin,black}]
\tikzstyle{block} = [draw, rectangle, 
    minimum height=3em, minimum width=6em]
\tikzstyle{sum} = [draw, circle, node distance=1cm]
\tikzstyle{input} = [coordinate]
\tikzstyle{output} = [coordinate]
\tikzstyle{pinstyle} = [pin edge={to-,thin,black}]

\begin{tikzpicture}[auto, node distance=2cm,>=latex']
    \node [input, name=input] {};
    \node [sum, right of=input] (sum) {};
    \node [block, right of=sum] (controller) {$K(s)$};
    \node [block, right of=controller, node distance=3cm] (system) {$P(s)$};

    \draw [->] (controller) -- node[name=u] {$u$} (system);
    \node [output, right of=system] (output) {};
    \node [block, below of=u] (delay) {$\Delta(s)$};

    \draw [draw,->] (input) -- node {$r$} (sum);
    \draw [->] (sum) -- node {$e$} (controller);
    \draw [->] (system) -- node [name=y] {$y$}(output);
    \draw [->] (y) |- (delay);
    \draw [->] (delay) -| node[pos=0.99] {$-$} (sum);
\end{tikzpicture}
\caption{Block diagram representation of a causal LTI SISO feedback interconnection between a controller $K$, a plant $P$, and an uncertainty $\Delta$.}
\label{fig:blockdiagram}
\end{figure}
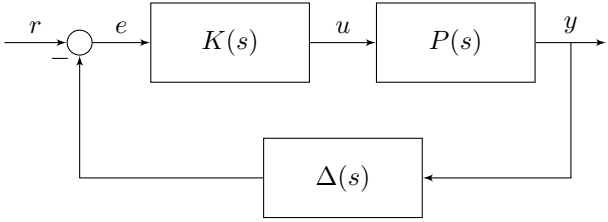

If $K$ stabilizes $P$, by continuity it also stabilizes $P\Delta$ for $\Delta$ sufficiently close to 1. An important question is how large the set of perturbations can be while retaining internal stability. In the following subsections we will discuss this problem for gain, phase, and delay uncertainties.

\subsection{The gain and phase margin problems}
\label{subsec:gain_and_phase_margin}

Two classical setups are when the class of perturbations are of the form $\Delta(s) \equiv \kappa$ for $\kappa \in [1, k]$, and $\Delta(s) \equiv e^{-i\theta}$ for $\theta \in [-\varphi, \varphi]$, corresponding to an uncertainty in the gain and the phase, respectively.

For a given controller $K$, the {\em gain margin\/} is the largest $k$ for which the controller stabilizes $\kappa P$ for all $\kappa \in [1, k)$, i.e.,
\[
\sup_{k \geq 1} \; k  \text{ such that } K \text{ stabilizes } \kappa P \text{ for } \kappa \in [1, k].
\]
This $k$ is easily found in the Nyquist plot, and corresponds to the largest interval $[-1, -1/k)$ that does not intersect the Nyquist curve $K(i\omega) P(i\omega)$, cf. \cite[Sec.~9.3]{astrom2008feedback}, \cite[pp.~51-52]{doyle1992feedback}, \cite[Sec.~3.1.1]{sepulchre1997constructive}. For a given plant $P$, the maximum gain margin problem is to determine a tight upper bound for the largest achievable gain margin:
\[
\sup_{\substack{k \geq 1 \\ K \in \quotientfield}} \; k \text{ such that } K \text{ stabilizes } \kappa P \text{ for } \kappa \in [1, k].
\]
We call this the {\em maximum gain margin\/} of the plant. In the Nyquist plot, this amounts to introducing a forbidden area  $[-1, -1/k)$ where the Nyquist curve is not allowed to enter, and finding the supremum over all such areas where stabilization is still possible. This forbidden area is illustrated in Figure~\ref{subfig:nyquist_gain_margin}, \rev{where
$\real(\cdot)$ and $\imag(\cdot)$ denote the real and imaginary parts, respectively}. Next we note that the the M{\"o}bius transformation
\begin{equation}\label{eq:mobius}
s/(1 + s)
\end{equation}
maps the Nyquist curve $P(i\omega)K(i\omega)$ to the complementary sensitivity $T(i\omega)$. This means that the forbidden area in the Nyquist plot can be understood as a forbidden area for the complementary sensitivity function, as illustrated in Figure~\ref{subfig:interpolation_gain}. More precisely, the problem of finding a controller with gain margin $k$ can be formulated as finding a complementary sensitivity function $T \in \HinfC$ such that $T$ satisfies the interpolation conditions \eqref{eq:interpolation} and $T(i\omega) \not \in [-\infty, -1/(k - 1)]$. This formulation in terms of the complementary sensitivity function can, via a conformal mapping that maps the allowed region to the (open) unit disc, be used to compute the maximum gain margin \cite{tannenbaum1980feedback, khargonekar1985non}, \cite[Sec.~11.3]{doyle1992feedback}.

Likewise, the {\em maximum phase margin\/} for a plant $P$ is the tight upper bound for the largest achievable phase margin:
\[
\sup_{\substack{\varphi \geq 0 \\ K \in \quotientfield}} \; \varphi \text{ such that } K \text{ stabilizes } e^{-i\theta} P \text{ for } \theta \in [-\varphi, \varphi].
\]
In the Nyquist plot, this corresponds to the forbidden area being the arc $\{ -e^{i\theta} \mid \theta \in [-\varphi, \varphi] \}$, as shown in Figure~\ref{subfig:nyquist_phase_margin}, cf. \cite[Sec.~9.3]{astrom2008feedback}, \cite[p.~53]{doyle1992feedback}, \cite[Sec.~3.1.1]{sepulchre1997constructive}. When the forbidden area for the Nyquist curve is mapped via \eqref{eq:mobius}, the corresponding forbidden area for the complementary sensitivity $T(i\omega)$ is given by $\cut{-\varphi} \cup \cut{\varphi}$, where we define
\begin{equation}\label{eq:def_cut}
\cut{\varphi} \! := \!
\begin{cases}
\emptyset, &\!\!\! \text{for } \varphi \! = \! 0 \\
\frac{1}{2} + \frac{i}{2} [-\infty, -\cot(\varphi/2)], &\!\!\! \text{for } 0 \! < \! \varphi \! < \! 2\pi \\
\frac{1}{2} + \frac{i}{2} [-\cot(\varphi/2), \infty], &\!\!\! \text{for } -2\pi \! < \! \varphi \! < \! 0 \\
\frac{1}{2} + i\mR, &\!\!\! \text{for }  |\varphi| \! \geq \! 2\pi.
\end{cases}
\end{equation}
This area is illustrated in \ref{subfig:interpolation_phase}. Note that the M\"obius transformation \eqref{eq:mobius} maps the unit circle to the line $0.5 + i\mR$. As in the gain margin setting, the maximum phase margin can be computed explicitly \cite[Sec.~11.4]{doyle1992feedback}.

The gain and phase margins are often used to quantify robustness of a feedback system. However, even if the gain and phase margins are both large, a small perturbation in both gain and phase simultaneously may still render the system unstable, see, e.g., \cite[pp.~239-240]{zhou1996robust}. This can be understood by considering the forbidden area in the Nyquist plot for an independent gain and phase margin of $k$ and $\varphi$, respectively, as illustrated in Figure~\ref{subfig:nyquist_gain_phase_indep}. To remedy this, one can consider the problem of simultaneous gain and phase margin, i.e., perturbations $\Delta(s) \equiv \kappa e^{-i \theta}$ where $\kappa \in [1, k]$ and $\theta \in [-\varphi, \varphi]$. This gives forbidden areas for the Nyquist curve and complementary sensitivity functions illustrated in Figures~\ref{subfig:nyquist_gain_phase_margin} and \ref{subfig:interpolation_gain_phase}, respectively. Equations for the boundaries of the forbidden areas for the complementary sensitivity function are provided in Appendix~\ref{app:sim_gain_phase}. For a desired simultaneous gain and phase margin, the forbidden region for the complementary sensitivity function can be mapped to the complement of a disc using a numerically computed conformal map, see, e.g., \cite{nehari1952conformal, trefethen1980numerical, bjorstad1987conformal, howell1993numerical, marshall2007convergence}, and thus the existence of a controller can be tested numerically. An approximate approach to this is to extend the forbidden area in the Nyquist plot to a disc. This gives rise to the so-called {\em disk margin\/} \cite{blight1994practical, sepulchre1997constructive}, which is conservative as an estimate for simultaneous gain and phase margin.

\begin{figure*}[tb]
\begin{center}
\begin{subfigure}[t]{.49\columnwidth}
 \centering
 \includegraphics[width=\textwidth]{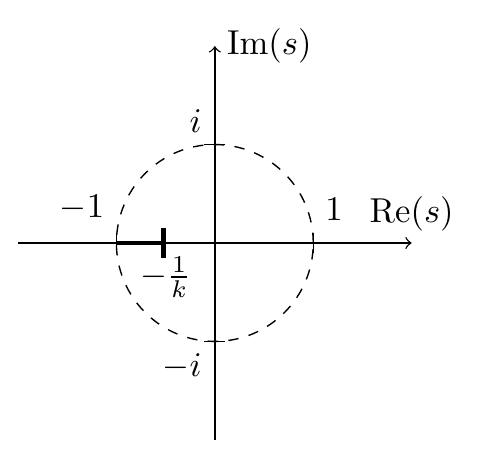}
 \subcaption{Gain margin.}
 \label{subfig:nyquist_gain_margin}
\end{subfigure}
\hfill
\begin{subfigure}[t]{.49\columnwidth}
\centering
  \includegraphics[width=\textwidth]{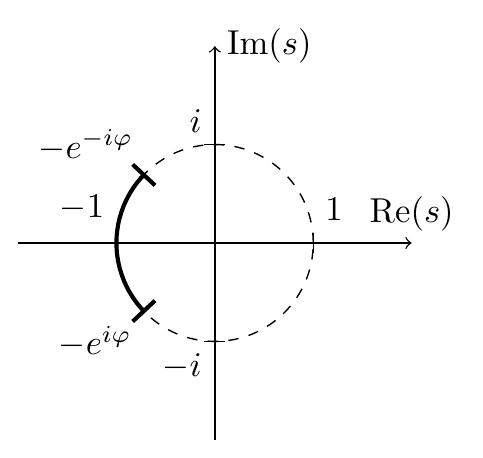}
 \subcaption{Phase margin.}
 \label{subfig:nyquist_phase_margin}
\end{subfigure}
\hfill
\begin{subfigure}[t]{.49\columnwidth}
 \centering
 \includegraphics[width=\textwidth]{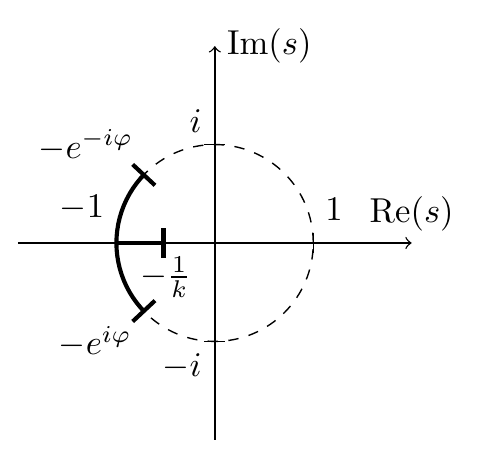}
 \subcaption{Independent gain and phase margins.}
 \label{subfig:nyquist_gain_phase_indep}
\end{subfigure}
\hfill
\begin{subfigure}[t]{.49\columnwidth}
 \centering
 \includegraphics[width=\textwidth]{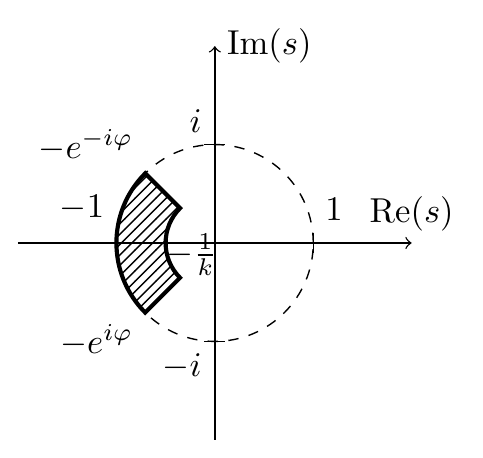}
 \subcaption{Simultaneous gain and phase margin.}
 \label{subfig:nyquist_gain_phase_margin}
\end{subfigure}
\caption{Illustration of forbidden areas in a Nyquist plot for gain maring, phase maring, independent gain and phase margin, and simultaneous gain and phase margin.}\label{fig:nyquist_gain_phase}
\end{center}
\end{figure*}

\begin{figure*}[tb]
\begin{center}
\begin{subfigure}[t]{.49\columnwidth}
 \centering
 \includegraphics[width=\textwidth]{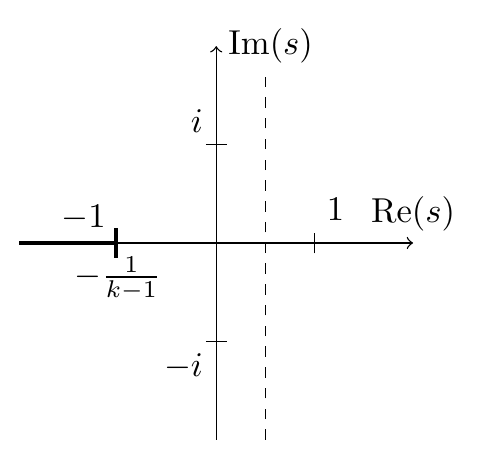}
 \subcaption{Gain margin.}
 \label{subfig:interpolation_gain}
\end{subfigure}
\hfill
\begin{subfigure}[t]{.49\columnwidth}
\centering
  \includegraphics[width=\textwidth]{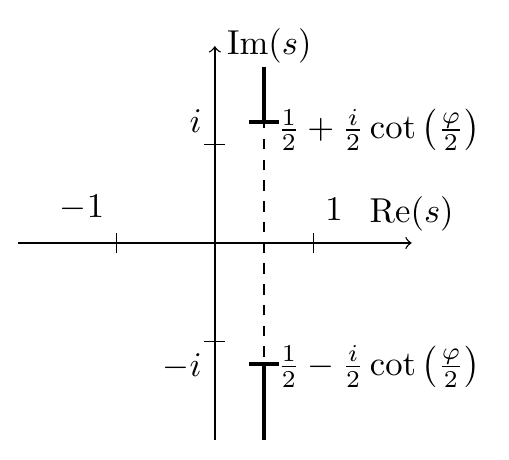}
 \subcaption{Phase margin.}
 \label{subfig:interpolation_phase}
\end{subfigure}
\hfill
\begin{subfigure}[t]{.49\columnwidth}
 \centering
 \includegraphics[width=\textwidth]{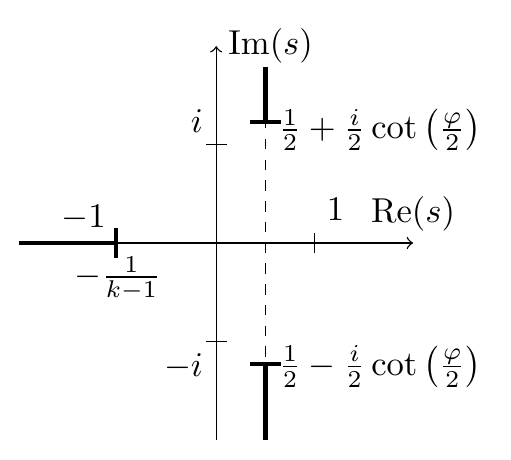}
 \subcaption{Independent gain and phase margins.}
 \label{subfig:interpolation_gain_phase_indep}
\end{subfigure}
\hfill
\begin{subfigure}[t]{.49\columnwidth}
 \centering
 \includegraphics[width=\textwidth]{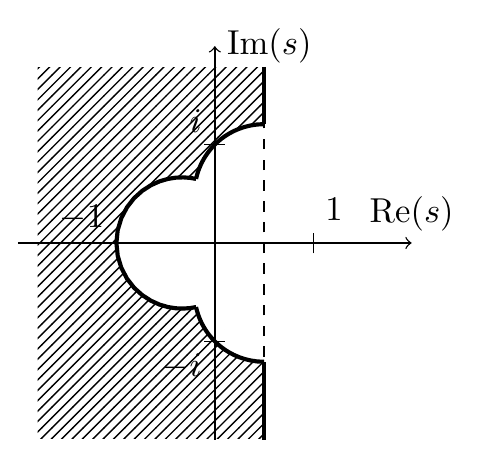}
 \subcaption{Simultaneous gain and phase margin.}
 \label{subfig:interpolation_gain_phase}
\end{subfigure}
\caption{
  Illustration of forbidden areas in the range of the interpolant for gain maring, phase maring, independent gain and phase margin, and simultaneous gain and phase margin. The dotted line, $0.5 + i\mR$, is the image of the unit cirlc under the M\"obius transformation \eqref{eq:mobius}.}\label{fig:interpolation_gain_phase}
\end{center}
\end{figure*}

\subsection{The delay margin problem}
\label{subsec:delay_margin}

In the delay margin problem the perturbations
\rev{take} the form
\[
\Delta(s)=e^{-s\tau}, \quad \text{ for } \tau \in [0,\bar\tau],
\]
\rev{and the} \emph{delay margin} for a given controller $K$ is defined as
\begin{align*}
\mathscr{T}(P, K) & := \sup_{\tau \geq 0} \; \tau \\
  & \text{such that } K \text{ stabilizes } Pe^{-t s} \text{ for } t \in [0, \tau].
\end{align*}
The \emph{maximum delay margin} for a plant $P$ \rev{is defined} as
\begin{align}\label{eq:taumax_def}
\taumax(P)  & := \sup_{K \in \quotientfield} \; \mathscr{T}(P, K),
\end{align}
which is the largest value such that for any $\bar{\tau} < \taumax$ there exists a controller that stabilizes the plant $P$ for all $\tau\in [0, \bar{\tau}]$.

Given a controller $K$ that stabilizes the plant $P$, the smallest delay that destabilizes the system corresponds to the smallest $\tau$ such that $-e^{i\omega\tau}$ intersects the Nyquist curve $K(i\omega)P(i\omega)$. This means that the robust stabilization problem of systems with uncertain time delay can be understood as forbidden regions in the Nyquist plot, just like the problems with uncertain gain and phase. However, in the case of an uncertain delay, the uncertainty becomes a frequency dependent phase shift, and thus the forbidden region is a frequency dependent connected subset of the unit circle. In particular, for a frequency $\omega$ the forbidden area is give by the arc $\{ -e^{i\omega\tau} \mid \tau \in [0, \bar{\tau}] \}$, as illustrated in Figures~\ref{subfig:nyquist_delay_margin_small_omega} and \ref{subfig:nyquist_delay_margin_large_omega} for two frequencies $0 < \omega < 2\pi/\bar{\tau}$. From this we also see that for $|\omega| \geq 2\pi/\bar{\tau}$, the Nyquist curve must remain either inside or outside the unit circle. However, the latter corresponds to that the loop gain $|P(\infty)K(\infty)| > 1$, which is unfeasible from a control perspective since any system model inevitably contains modeling errors at sufficiently high frequencies \cite[p.~35]{doyle1992feedback}.

As in the gain and phase margin problems, the forbidden region in the Nyquist plot can be translated into forbidden regions for the complementary sensitivity function $T$ using the M\"obius transformation \eqref{eq:mobius}. Since the unit circle is mapped to the line with real part $1/2$, the forbidden region will be frequency dependent cuts, $\cut{\omega \bar{\tau}}$ as defined in \eqref{eq:def_cut}, that will become increasingly restrictive as the modulus of the frequency, $|\omega|$, grows. This is illustrated in Figures~\ref{subfig:interpolation_delay_margin_small_omega} and \ref{subfig:interpolation_delay_margin_large_omega}. This connection will be further investigated in Section~\ref{subsec:delay_nyquist}.

Note that if the plant $P$ is stable we trivially have $\taumax = \infty$, since $K \equiv 0$ stabilizes it. The same observation holds for all stability margins considered in this paper,  and thus we shall only consider unstable plants. In contrast with the maximum gain and phase margin problems, the maximum delay margin problem is  unsolved. However, work has been done to obtain lower and upper bounds.

\begin{figure}[tb]
\begin{center}
\begin{subfigure}{.49\columnwidth}
 \centering
 \includegraphics[width=\textwidth]{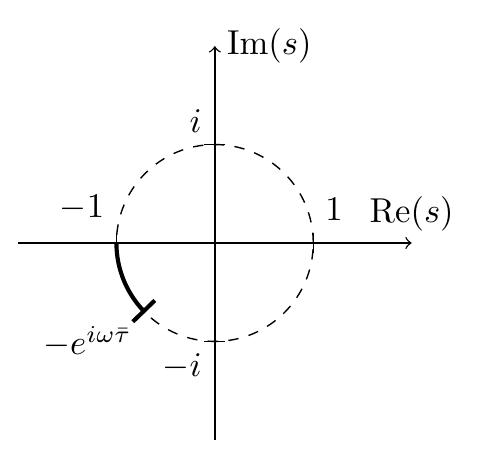}
 \subcaption{Forbidden areas in the Nyquist plot for frequency $\omega = \pi/(2\bar{\tau})$.}
 \label{subfig:nyquist_delay_margin_small_omega}
\end{subfigure}
\hfill
\begin{subfigure}{.49\columnwidth}
 \centering
 \includegraphics[width=\textwidth]{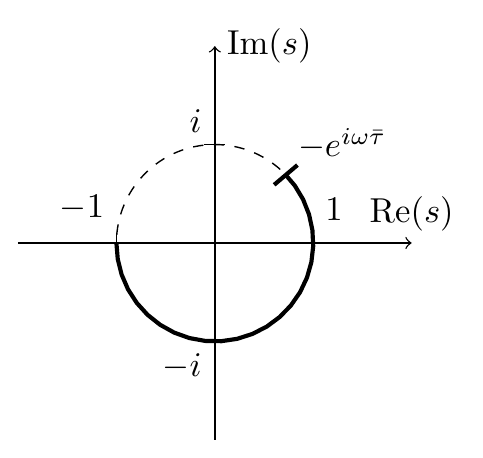}
 \subcaption{Forbidden areas in the Nyquist plot for frequency $\omega = 3\pi/(2\bar{\tau})$.}
 \label{subfig:nyquist_delay_margin_large_omega}
\end{subfigure}
\\
\begin{subfigure}{.49\columnwidth}
\centering
  \includegraphics[width=\textwidth]{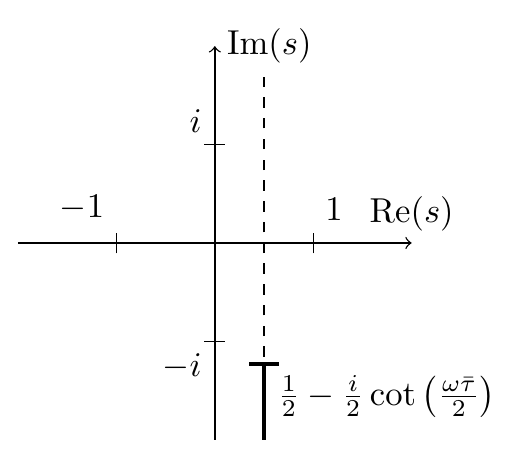}
 \subcaption{Forbidden areas in the range of the interpolant for frequency $\omega = \pi/(2\bar{\tau})$.}
 \label{subfig:interpolation_delay_margin_small_omega}
\end{subfigure}
\hfill
\begin{subfigure}{.49\columnwidth}
\centering
  \includegraphics[width=\textwidth]{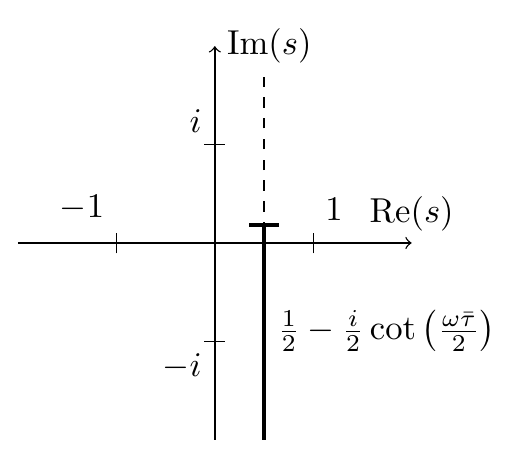}
 \subcaption{Forbidden areas in the range of the interpolant for frequency $\omega = 3\pi/(2\bar{\tau})$.}
 \label{subfig:interpolation_delay_margin_large_omega}
\end{subfigure}
\caption{Illustration of forbidden region, in both the Nyquist plot and for the range of the interpolant, for the delay margin problem. The regions are frequency dependent, and the areas are illustrated for two different frequencies.
}\label{fig:nyquist_interpolation_delay_margin}
\end{center}
\end{figure}

Upper bounds for the maximum delay margin problem have been studied in \cite{middleton2007achievable}.  In fact, the results in \cite{middleton2007achievable} are the first that show that there is an upper bound of the achievable delay margin when using LTI controllers, and a region for the delay where stabilization is not possible is described. However, these bounds are in general not tight, except for some special cases,
\rev{for example the case of real plants with one unstable pole $p$ and a potential nonminimum phase zero $z$ with $p < z$.
For other cases, these bounds}
have recently also been improved upon in \cite{ju2016further, ju2018achievable}.

To ensure stability we are in general more interested in a lower bound $\bar{\tau}\leq \taumax$, and this is also the problem considered in this paper. As starting point we take  the recent approach from \cite{qi2014fundamental,qi2017fundamental}, where a method for computing a lower bound was constructed based on analytic interpolation and rational approximation.
To this end, we observe that the condition $1 + P(s)K(s)e^{-\tau s} \neq 0$ for all $s \in \bar \mC_+$ and all $\tau \in [0, \bar{\tau}]$, corresponding to condition \eqref{eq:stability} for robust stabilization of a delay system with delay uncertainty $\tau \in [0, \bar{\tau}]$, can be written in terms of the the complementary sensitivity function as
\begin{equation}\label{eq:nec_and_suff_cond}
1+ T(s)(e^{-\tau s}-1)\neq 0 \quad \text{for  $s\in \bar{\mC}_+$ and $\tau \in [0, \bar{\tau}]$}.
\end{equation}
A sufficient condition for \eqref{eq:nec_and_suff_cond} to hold is that there exists a $T$ satisfying the interpolation conditions \eqref{eq:interpolation} such that
\begin{equation} \label{eq:suff_cond0}
\sup_{\tau\in [0, \bar{\tau}]} \|T(s)(e^{-\tau s}-1)\|_{\Hinf}<1.  
\end{equation}
By noting that the $\Hinf$-norm is defined by a supremum, and since $T \in \Hinf$, chaining the order of the two suprema leads to the equivalent condition 
\[
\|T(i \omega) \phi_{\bar{\tau}}(\omega)  \|_{\Linf}<1,
\]
where
\begin{align}
\phi_{\bar{\tau}}(\omega) & =  \sup_{\tau\in [0, \bar{\tau}]}  | e^{-i\tau \omega}-1| \nonumber \\
& = \begin{cases}2\left|\sin(\frac{\bar{\tau} \omega}{2})\right| &\mbox{ for } |\omega \bar{\tau}| \le \pi\\
2 &\mbox{ for } |\omega \bar{\tau}| > \pi.
\end{cases} \label{eq:w_tau}
\end{align}
Therefore, a sufficient condition for the existence of a stabilizing controller with delay margin $\bar{\tau}$ is that
\begin{equation}\label{eq:suff_cond}
\inf_{\substack{T \in \Hinf \\ \text{subject to } \eqref{eq:interpolation}}} \|T(i \omega) \phi_{\bar{\tau}}(\omega)  \|_{\Linf}<1.
\end{equation}

However, \eqref{eq:w_tau} is an infinite-dimensional function, i.e., not rational. To overcome this, in \cite{qi2017fundamental} the function $\phi_{\bar{\tau}}$ is approximated by the magnitude of a stable, minimum-phase rational function $w_{\bar{\tau}}$ such that $\phi_{\bar{\tau}}(\omega)\leq |w_{\bar{\tau}}(i\omega)|$ for all $\omega \in \mR$. Using this approximation and the interpolation conditions on $T$ for internal stability, the authors derive an algorithm for computing the largest $\bar{\tau}$ for which \eqref{eq:suff_cond} holds. This thus gives a lower bound for the maximum delay margin.

\section{Lower bounds on the delay margin via analytic interpolation}
\label{sec:lower_bound_basic}

To introduce our analytic interpolation setting we first derive an algorithm previously presented in \cite{ringh2018lower} as a modification (and possible improvment) of that in \cite{qi2014fundamental, qi2017fundamental}. To this end, note that \eqref{eq:suff_cond}, i.e., the sufficient condition for the closed loop system to be internally stable for all $\tau\in [0, \bar{\tau}]$, holds if there exists a $T(s) \in \HinfC$ such that
\begin{equation}\label{eq:basic_interpolation_problem_1}
\| T(i\omega) \phi_{\bar{\tau}}(\omega) \|_{\Linf} \! <  1
\; \text{and}
\begin{cases} T(p_j)=1, \; j = 1, \ldots, n, \\ T(z_j)=0, \; j = 1, \ldots, m.
\end{cases} \!\!\!\!\!\!\!
\end{equation}
Next, observe that $\int_{-\infty}^{\infty}\log(\phi_{\bar{\tau}}(\omega))/(1 + \omega^2) d\omega < \infty$. This means that $\phi_{\bar{\tau}}$ can be extended from a function on the imaginary axis to a so-called outer function $W_{\bar{\tau}} \in \HinfC$ \cite[pp.~132-133]{hoffman1962banach}.%
\footnote{In the control literature, an outer function is often referred to as a minimum phase function (cf. \cite[pp.~93-94]{doyle1992feedback}).}
The extension, which is such that $W_{\bar{\tau}}$ has the same magnitude as $\phi_{\bar{\tau}}$ almost everywhere on $i\mR$, is given by   \cite[p.~133]{hoffman1962banach}
\begin{equation}
\label{eq:outerrepr}
W_{\bar{\tau}}(s) = \exp \! \left[ \frac{1}{\pi} \int_{-\infty}^\infty \!\! \log \big( \phi_{\bar{\tau}}(\omega) \big) \frac{ \omega s + i}{\omega + is} \frac{1}{1+\omega^2} \,d\omega \right] \! . \!
\end{equation}
This means that \eqref{eq:basic_interpolation_problem_1} is equivalent to
\begin{equation}\label{eq:basic_interpolation_problem_15}
\| T W_{\bar{\tau}} \|_{\Hinf} < 1
\; \text{and}
\begin{cases} T(p_j)=1, \; j = 1, \ldots, n, \\ T(z_j)=0, \; j = 1, \ldots, m.
\end{cases}
\end{equation}
Now, using the fact that $W_{\bar{\tau}}$ is outer and hence that it has no poles or zeros in $\mC_+$, we set $\tilde{T}:=TW_{\bar{\tau}}$ and note that if $\tilde{T} \in \Hinf$, then the corresponding complementary sensitivity function $T = \tilde{T}W_{\bar{\tau}}^{-1}$ is also analytic in $\mC_+$ and satisfies the interpolation conditions.%
\footnote{Note that $W_{\bar \tau}(0)=0$, and hence $T$ has a pole in $0$. To overcome this problem, one can impose a lower bound on $\phi_{\bar \tau}$ which ensures that $T\in \HinfC$, see Section~\ref{sec:control_implementation}.}
 An equivalent problem to \eqref{eq:basic_interpolation_problem_15}
is therefore 
\begin{equation}\label{eq:basic_interpolation_problem_2}
\| \tilde T \|_{\Hinf} \! < 1
\; \text{and}
\begin{cases}
 \tilde T(p_j)=W_{\bar{\tau}}(p_j), &j = 1, \ldots, n, \\
 \tilde T(z_j)=0,  &j = 1, \ldots, m,
 \end{cases}
\end{equation}
and thus the only way the weight enters is through the values of the outer function $W_{\bar{\tau}}$ at the pole locations $p_j$ \cite[Sec.~III.B]{zames1983feedback},  \cite{kimura1984robust}, \cite[Sec.~4.C]{karlsson2010theinverse} (cf. \cite[Prop.~7]{karlsson2009degree}).

Thus \eqref{eq:suff_cond} holds if and only if
there is a $\tilde{T} \in \HinfC$ such that \eqref{eq:basic_interpolation_problem_2} holds.
The values $W_{\bar{\tau}}(p_j)$,  $ j = 1, \ldots, n$, can be computed from \eqref{eq:outerrepr} by numerical integration. Moreover, setting 
\begin{subequations}\label{eq:pick_values}
\begin{align}
    &v := [p_1, \ldots, p_n, z_1, \ldots, z_m]   \\
    & w := [W_{\bar{\tau}}(p_1), \ldots, W_{\bar{\tau}}(p_n), 0, \ldots, 0] ,
\end{align}
\end{subequations}
 the interpolation problem \eqref{eq:basic_interpolation_problem_2} is solvable if and only if the corresponding Pick matrix
\begin{equation}\label{eq:pick_matrix}
\text{Pick}(v,w) := \begin{bmatrix}
\frac{1-w_j\bar w_k}{v_j+\bar{v}_k}
\end{bmatrix}_{j,k = 1}^{n+m}
\end{equation}
is positive definite;  see, e.g.,
\cite[pp.~157-159]{doyle1992feedback}.
This is true for distinct poles and zeros. If this is not the case, \eqref{eq:pick_matrix} needs to be replaced by a more general criterion, e.g., using the input-to-state framework \cite{byrnes2001ageneralized,georgiou2002structure}
as in \cite{blomqvist2005optimization}.

To summarize, this means that for a given $\bar \tau$, the problem \eqref{eq:suff_cond} has a solution if and only if the Pick matrix \eqref{eq:pick_matrix} with interpolation values \eqref{eq:pick_values} is positive definite. Moreover, since $\phi_{\bar{\tau}}(\omega)$ is point-wise nondecreasing in $\bar{\tau}$, if \eqref{eq:suff_cond} has a solution for some $\bar{\tau}$ then it has a solution for any smaller value of $\bar{\tau}$. The latter means that the optimal $\bar \tau$ can be computed using bisection, iteratively testing the feasibility of \eqref{eq:suff_cond} by evaluating the Pick matrix \eqref{eq:pick_matrix}. The method is summarized in Algorithm~\ref{alg:first_method}. Note that by using \cite[Thms.~7, 9, and 10]{middleton2007achievable} we have that $2\pi/\max_j(|p_j|)\ge\tau_{\rm max}$, which gives a valid choice for the initial upper bound in the bisection algorithm.

\renewcommand{\algorithmicrequire}{\textbf{Input:}}
\renewcommand{\algorithmicensure}{\textbf{Output:}}
\algsetup{indent=12pt}

\begin{algorithm}[tb]
\caption{Lower bound on maximum delay margin}
\label{alg:first_method}
\begin{algorithmic}[1]
\REQUIRE Unstable poles $p_j$,  $j = 1, \ldots, n$, and nonminimum phase zeros $z_j$,  $j = 1, \ldots, m$, of the plant $P$.

\STATE $\tau_- = 0.$
\STATE $\tau_+=2\pi/\max_j(|p_j|)$, 
\WHILE{$\tau_+ - \tau_- > \texttt{tol}$}
\STATE $\tau_{\rm mid} = (\tau_+ + \tau_-)/2$
\STATE Compute new interpolation values $W_{\tau_{\rm mid}}(p_j)$
\IF{Pick matrix \eqref{eq:pick_matrix} with values \eqref{eq:pick_values} is positive definite}
\STATE $\tau_- = \tau_{\rm mid}$
\ELSE
\STATE $\tau_+ = \tau_{\rm mid}$
\ENDIF
\ENDWHILE
\STATE $\bar{\tau}
 = \tau_-$
\ENSURE $\bar{\tau}
$, lower bound on maximum delay margin
\end{algorithmic}
\end{algorithm}

The improvement of this method over that in \cite{qi2017fundamental} depends on how well the magnitude of the fifth-order approximation $w_{6\tau}(i\omega)$ used in \cite{qi2017fundamental} fits $\phi_{\bar{\tau}}(\omega)$ for $\omega\in \mR$. However, the the formulation presented here allows for interpretations and extensions to be presented in the coming sections.

\subsection{Interpretation in terms of the Nyquist plot}
\label{subsec:delay_nyquist}

As described in Section~\ref{subsec:delay_margin}, in order to stabilize the system for all delays $\tau \in [0, \bar{\tau}]$ the complementary sensitivity function must fulfill the interpolation conditions \eqref{eq:interpolation} and satisfy \eqref{eq:nec_and_suff_cond}. Moreover, in that section it was also argued, using constraints on the Nyquist curve and the M\"obius transformation \eqref{eq:mobius}, that robust stability could also be characterized by $T(i\omega)$ not intersecting $\cut{\omega \bar{\tau}}$. For a set $A$ define
\[
\dist(A,x) := \inf_{y \in A} | x - y |
\]
to be the distance from a point $x$ to $A$. Then the observations in Section~\ref{subsec:delay_margin} can be formalized as follows.

\begin{thm}\label{thm:dist_to_cut}
Let $T \in \HinfC$, and assume that $T(\infty)$ is well-defined and that $\real(T(\infty)) < 1/2$. Moreover, let $\bar{\tau} > 0$. Then the following two statements are equivalent:
\begin{enumerate}
\item there exists an $\varepsilon > 0$ such that $|1 + T(s)(e^{-\tau s} + 1)| \geq \varepsilon$ for all $s \in \bar{\mC}_+$ and $\tau \in [0, \bar{\tau}]$
\item there exists an $\varepsilon > 0$ such that $\dist ( \cut{\omega\bar{\tau}}, T(i\omega)) \geq \varepsilon$ for all $\omega \in \mR$.
\end{enumerate}
\end{thm}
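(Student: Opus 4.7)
Write $g_\tau(s) := 1 + T(s)(e^{-\tau s} - 1)$ (the ``$+1$'' in statement~(i) appears to be a typo given~\eqref{eq:nec_and_suff_cond}). The plan is to handle the two directions by separate routes: (1)$\Rightarrow$(2) by a clean algebraic factorization on the imaginary axis, and (2)$\Rightarrow$(1) by a Rouch\'e-type homotopy on a bounded half-disk combined with a uniform lower bound at infinity that exploits the hypothesis $\real(T(\infty)) < 1/2$.

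For (1)$\Rightarrow$(2), restrict $s$ to $i\omega$ and, whenever $e^{-i\omega\tau} \neq 1$, factor
\begin{equation*}
g_\tau(i\omega) = (1 - e^{-i\omega\tau})\Bigl(\tfrac{1}{1 - e^{-i\omega\tau}} - T(i\omega)\Bigr).
\end{equation*}
Since $|1 - e^{-i\omega\tau}| \le 2$, this gives $|T(i\omega) - 1/(1 - e^{-i\omega\tau})| \ge \varepsilon/2$. A direct computation yields $1/(1 - e^{-i\omega\tau}) = \tfrac12 - \tfrac{i}{2}\cot(\omega\tau/2)$, and as $\tau$ ranges over $(0, \bar{\tau}]$ this sweeps out precisely the set $\cut{\omega\bar{\tau}}$ from~\eqref{eq:def_cut}, so $\dist(\cut{\omega\bar{\tau}}, T(i\omega)) \ge \varepsilon/2$. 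The case $e^{-i\omega\tau} = 1$ is trivial since then $g_\tau(i\omega) = 1$.

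For (2)$\Rightarrow$(1), I would proceed in four steps. \emph{Step~(a):} reverse the factorization to obtain a uniform lower bound $|g_\tau(i\omega)| \ge \varepsilon_1 > 0$ on $i\mR \times [0, \bar{\tau}]$, splitting into the regime $|T(i\omega)(1 - e^{-i\omega\tau})| \le 1/2$ (where the triangle inequality gives $|g_\tau| \ge 1/2$) versus $> 1/2$ (where the factorization together with~(2) gives $|g_\tau| \ge \varepsilon/(2\|T\|_{\Hinf})$). \emph{Step~(b):} note that $\real(T(\infty)) < 1/2$ is equivalent to $|1 - T(\infty)| > |T(\infty)|$, whence $\inf_{|z| \le 1} |1 + T(\infty)(z - 1)| = |1 - T(\infty)| - |T(\infty)| > 0$; using that $T(s) \to T(\infty)$ as $|s| \to \infty$ in $\bar{\mC}_+$, choose $R > 0$ so that $|g_\tau(s)| \ge \varepsilon_2 > 0$ on $\{|s| \ge R\} \cap \bar{\mC}_+$ uniformly in $\tau \in [0, \bar{\tau}]$. \emph{Step~(c):} on the bounded half-disk $D_R := \mC_+ \cap \{|s| < R\}$, steps~(a) and~(b) ensure that $g_\tau$ is zero-free on $\partial D_R$ uniformly in $\tau$, so the winding number of $g_\tau(\partial D_R)$ about the origin depends continuously on $\tau$ and is integer-valued, hence constant; since $g_0 \equiv 1$ has winding number zero, $g_\tau$ has no zeros in $D_R$, and combining with~(b) no zeros anywhere in $\bar{\mC}_+$. \emph{Step~(d):} on the compact set $(\bar{\mC}_+ \cap \{|s| \le R\}) \times [0, \bar{\tau}]$ the jointly continuous function $|g_\tau(s)|$ is strictly positive, hence bounded below by some $\varepsilon_3 > 0$, and combining with~(b) yields~(1) with $\varepsilon = \min(\varepsilon_2, \varepsilon_3)$.

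The main obstacle is step~(c): the argument principle is naturally posed on a compact domain, and the reduction to a bounded half-disk hinges crucially on step~(b), where the hypothesis $\real(T(\infty)) < 1/2$ does its essential work. Without it, zeros of $g_\tau$ could escape to infinity as $\tau$ varies and the homotopy argument would fail.
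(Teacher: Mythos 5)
Your proof is correct (and you are right that the ``$+1$'' in statement~1 is a typo for ``$-1$''), and it takes essentially the same route as the paper: the forward direction uses the identical factorization $1+T(e^{-\tau s}-1)=(1-e^{-\tau s})\bigl(\tfrac{1}{1-e^{-\tau s}}-T\bigr)$ with $1/(1-e^{-i\omega\tau})$ parametrizing $\cut{\omega\bar{\tau}}$, and the reverse direction is a homotopy-in-$\tau$ argument-principle argument combined with the bound at infinity furnished by $\real(T(\infty))<1/2$ and a compactness step. The only cosmetic difference is that you run the winding-number homotopy on a truncated half-disk with a quantitative boundary bound, whereas the paper phrases the same idea through its Nyquist-encirclement lemma on the unbounded contour and obtains uniformity at the end via a sequence argument.
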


\begin{proof}
See Appendix~\ref{app:dist_to_cut}.
\end{proof}

\begin{remark}\label{rem:T_inf_less_than_half}
The condition $\real(T(\infty)) < 1/2$ can be understood in terms of the loop gain. To see this, note that $\real(T(\infty)) \geq 1/2$ corresponds to that the loop gain $|P(\infty)K(\infty)| \geq 1$. As mentioned in Section~\ref{subsec:delay_margin} this is undesirable for control systems, see \cite[p.~35]{doyle1992feedback}. Also note that if $\real(T(\infty)) \geq 1/2$, then there is a (unbounded) sequence $(s_n)_n$ such that $1 + T(s_n)(e^{-\tau s_n} + 1)  \to 0$ for any $\tau > 0$; see the proof of Lemma~\ref{lem:T_0_infty} in Appendix~\ref{app:proof_lem_T_0_infty}.
\end{remark}

The frequency dependent forbidden region for $T$ provides an interpretation of the weight function $\phi_{\bar{\tau}}$. To see this, consider the constraint $\| T(i\omega) \phi_{\bar{\tau}}(\omega) \|_{\Linf} < 1$ in \eqref{eq:basic_interpolation_problem_1}. For each frequency $\omega$, this confines the complementary sensitivity function, $T(i\omega)$, to a ball centered at the origin and with radius $| \phi_{\bar{\tau}}(\omega)|^{-1}$. By a direct calculation, it can be verified that this corresponds to the minimal distance between the origin and the set $\cut{\omega \bar{\tau}}$.

Finally, for an unstable plant $P$, the Nyquist stability criterion \cite[p.~39]{doyle1992feedback} requires that the Nyquist curve encircles the point $-1$ at least once. For a sufficiently large frequency, $|\omega| > 2\pi/\bar{\tau}$, the forbidden arc will fill the whole unit circle. So in order to stabilize for all delays $\tau \in [0, \bar{\tau}]$, the encirclement must happen at a sufficiently low frequency. Analogously, the complementary sensitivity function takes the value $1$ at the unstable poles, thus the curve $T(i\omega)$ must have real part larger than $1$ for some frequencies. This amounts to passing back and forth between cuts before the gap closes. To facilitate this, we will next introduce a frequency dependent shift.

\section{Improving the lower bound using a shift}
\label{sec:improved_method}

As noted above, the constraint $| T(i\omega) \phi_{\bar{\tau}}(\omega) | < 1$ forces $T$ to take values in a disc centered at the origin, where the radius is given by the distance between the origin and the cut $\cut{\omega \bar{\tau}}$. However, choosing the center of the disc at the origin is quite arbitrary, and by instead carefully selecting the center elsewhere, we may improve the estimate of the lower bound. To this end, consider
\begin{equation}\label{eq:T0}
T(s) = \hat{T}(s) + T_0(s),
\end{equation}
where $T_0(i\omega)$ represents the center of the disc at frequency $\omega$, and $\hat{T}(i\omega)$ will be constrained so that $T(i\omega)$ does not intersect the cut. Since we require $T \in \HinfC$, we need $T_0$ and $\hat{T}$ to also be in $\HinfC$.

With this construction, and by using \eqref{eq:nec_and_suff_cond}, the feedback system is internally stable for all $\tau \in [0, \bar{\tau}]$ if $T \in \HinfC$ fulfills the interpolation conditions \eqref{eq:interpolation}, and  we have that
\begin{equation}
\label{eq:nec_and_suff_cond_modified}
\hat{T}(s) \big( e^{-\tau s} - 1 \big) \neq -1 + T_0(s)(1 - e^{-\tau s})
\end{equation}
for all $s \in \bar{\mC}_+$ and all $\tau \in [0, \bar{\tau}]$. By Theorem~\ref{thm:dist_to_cut}, the right hand side of \eqref{eq:nec_and_suff_cond_modified} is nonzero in $\bar{\mC}_+$ if $T_0(i\omega)$ does not intersect $\cut{\omega \bar{\tau}}$. Note, however, that $T_0$ is not required to satisfy the interpolation conditions \eqref{eq:interpolation}. Since the right hand side does not have any zeros in $\bar{\mC}_+$  the inverse is also in $\HinfC$ and thus \eqref{eq:nec_and_suff_cond_modified} can be written as
\begin{equation}\label{eq:new_nec_and_suff_cond}
\hat{T}(s) \frac{e^{-\tau s} - 1}{1 - T_0(s)(1 - e^{-\tau s})} \neq -1.
\end{equation}
Therefore we need to modify the function $\absfuncwzero$ in 
\rev{\eqref{eq:w_tau}}
to read
\begin{equation}\label{eq:weight_with_T0}
\absfuncwzero(\omega) := \sup_{\tau \in [0, \bar{\tau}
]} \left| \frac{e^{-\tau i\omega} - 1}{1 - T_0(i\omega)(1 - e^{-\tau i\omega})} \right|.
\end{equation}
As shown in Appendix~\ref{app:phi_inv}, this implies that
\begin{equation}\label{eq:phi_T_zero}
\absfuncwzero(\omega)^{-1} = \dist(\cut{\omega\bar{\tau}}, T_0(i\omega)),
\end{equation}
which is precisely what we wanted to achieve by introducing $T_0$ in \eqref{eq:T0}.
Moreover, this distance can be computed more explicitly, and the expression is given in Appendix~\ref{app:phi_inv}. Furthermore, note that \eqref{eq:new_nec_and_suff_cond} reduces to \eqref{eq:nec_and_suff_cond} when $T_0(s) \equiv 0$. Using the same argument as before, we see that 
\begin{equation}\label{eq:Thatt_ball}
\| \hat{T}(i \omega) \absfuncwzero(\omega) \|_{\Linf}<1
\end{equation}
is  a sufficient condition for  \eqref{eq:new_nec_and_suff_cond} to hold. 
In the special case that $T_0(s) \equiv T_0$ is constant, which was considered in \cite{ringh2018lower},  $-1 + T_0(1 - e^{-\tau s})$ is nonzero in $\bar{\mC}_+$ if \rev{and only if} $\real(T_0) < 1/2$. Similarly, in the general case, we have the following condition.

\begin{lemma}\label{lem:T_0_infty}
Assume that $T_0(\infty)$ is well-defined. If $\real(T_0(\infty)) < 1/2$ then there exists a $\bar{\tau} > 0$ such that $(1 - T_0(s)(1 - e^{-s\tau}))^{-1} \in \HinfC$ for all $\tau \in [0, \bar{\tau}]$. Conversely, if $\real(T_0(\infty)) \geq 1/2$, then $(1 - T_0(s)(1 - e^{-s\tau}))^{-1}$ is not bounded in $\bar{\mC}_+$ for any $\tau > 0$.
\end{lemma}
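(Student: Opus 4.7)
The plan is to work directly with the function $f_\tau(s) := 1 - T_0(s)(1 - e^{-s\tau})$: since $f_\tau$ is analytic in $\mC_+$, the statement $1/f_\tau \in \HinfC$ is equivalent to $\inf_{s \in \bar{\mC}_+} |f_\tau(s)| > 0$. The geometric observation that drives both directions is that for $\real(s) \geq 0$ one has $|e^{-s\tau}| \leq 1$, so $e^{-s\tau}$ ranges over the closed unit disc. When $|s|$ is large, $T_0(s)$ is close to $a := T_0(\infty)$, so $f_\tau(s)$ is close to $1 - a + a e^{-s\tau}$, whose range as $e^{-s\tau}$ sweeps the closed unit disc is the closed disc centered at $1 - a$ of radius $|a|$. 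This limiting disc contains the origin iff $|1 - a| \leq |a|$, which is exactly $\real(a) \geq 1/2$; that dichotomy is the engine of the lemma.

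For the direction $\real(a) < 1/2 \Rightarrow 1/f_\tau \in \HinfC$, I would set $\varepsilon := |1 - a| - |a| > 0$ and split $\bar{\mC}_+$ into the parts with $|s| \geq R$ and $|s| \leq R$. On the former I write $f_\tau = (1 - a + a e^{-s\tau}) - (T_0(s) - a)(1 - e^{-s\tau})$, bound the first parenthesis below by $\varepsilon$ using $|e^{-s\tau}| \leq 1$, and choose $R$ large so that continuity of $T_0$ at $\infty$ together with $|1 - e^{-s\tau}| \leq 2$ makes the second term at most $\varepsilon/2$, uniformly in $\tau \geq 0$. On the latter I invoke the standard inequality $|1 - e^{-s\tau}| \leq \tau|s|$ for $\real(s) \geq 0$ (which follows by integrating $e^{-w}$ along the segment from $0$ to $s\tau$) to conclude $|T_0(s)(1 - e^{-s\tau})| \leq \|T_0\|_{\Hinf} R \tau \leq 1/2$ for $\tau \in [0, \bar\tau]$ with $\bar\tau$ suitably small. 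Combining produces the uniform lower bound $|f_\tau(s)| \geq \min(\varepsilon/2, 1/2)$ over $\bar{\mC}_+ \times [0, \bar\tau]$, so $1/f_\tau \in \HinfC$ for every such $\tau$.

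For the converse, fix $\tau > 0$ and set $z := (a - 1)/a$, which is well-defined (since $\real(a) \geq 1/2$ forces $a \neq 0$) and satisfies $|z| \leq 1$. I solve $e^{-s\tau} = z$ explicitly: picking any branch of $\log$, the solutions $s_k := -(\log z + 2\pi i k)/\tau$, $k \in \mathbb{Z}$, all have $\real(s_k) = -\tau^{-1} \log|z| \geq 0$ and $|s_k| \to \infty$ as $|k| \to \infty$. By construction $a(1 - e^{-s_k\tau}) = a(1 - z) = 1$, so $f_\tau(s_k) = 1 - T_0(s_k)(1 - z)$; continuity of $T_0$ at $\infty$ gives $T_0(s_k) \to a$, hence $f_\tau(s_k) \to 0$, and therefore $1/f_\tau$ is unbounded on $\bar{\mC}_+$.

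The main obstacle is the borderline case $\real(a) = 1/2$ of the converse: here $|z| = 1$, so the sequence $s_k$ lies on the imaginary axis, and the argument requires the limit $T_0(i\omega) \to a$ as $|\omega| \to \infty$, not merely convergence from the open right half-plane. This forces one to pin down what the hypothesis ``$T_0(\infty)$ is well-defined'' actually means; under the natural reading (continuous extension to $\bar{\mC}_+ \cup \{\infty\}$), consistent with the analogous hypothesis in Theorem~\ref{thm:dist_to_cut}, this is automatic, but the statement should be tightened accordingly. Everything else is elementary estimation.
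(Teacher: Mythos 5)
Your proof is correct in substance but follows a genuinely different route from the paper. The paper proves the first direction by reducing to Theorem~\ref{thm:dist_to_cut}: membership of the inverse in $\HinfC$ is translated into a uniform lower bound on $\dist(\cut{\omega\bar\tau},T_0(i\omega))$, which is then established by splitting frequencies into $|\omega|\geq M$ (where $\real(T_0(i\omega))\leq 1/2-\varepsilon$) and $|\omega|\leq M$ (where $\bar\tau$ is chosen via the cotangent formula for the cut endpoint, yielding the explicit bound $\bar\tau\leq (2/M)\cot^{-1}(2\eta+2\varepsilon)$). You instead bound $|1-T_0(s)(1-e^{-s\tau})|$ directly on $\bar{\mC}_+$, splitting on $|s|\geq R$ versus $|s|\leq R$ and using $|1-e^{-s\tau}|\leq \tau|s|$; this is more elementary and self-contained (no appeal to Theorem~\ref{thm:dist_to_cut} or the cut geometry) and also produces an explicit $\bar\tau$. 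For the converse, the paper divides by $1-e^{-s\tau}$, uses that $\{1/(1-e^{-s\tau}) \mid s\in\mC_+\}$ is the half-plane $\{\real(z)>1/2\}$, picks a preimage of $T_0(\infty)+\delta$ with $\delta<\varepsilon/2$, and translates it vertically; because of the $\delta$-shift the sequence stays in the open half-plane, so the borderline case $\real(T_0(\infty))=1/2$ needs no boundary limit, and the case $T_0(\infty)=1$ causes no trouble. Your converse solves $e^{-s\tau}=(a-1)/a$ explicitly, which is more direct but has two rough edges: (i) when $T_0(\infty)=1$ you get $z=0$ and the equation $e^{-s\tau}=z$ has no solution, so your sequence does not exist — the fix is immediate (send $\real(s)\to+\infty$ so that $e^{-s\tau}\to 0$ and $f_\tau(s)\to 1-a=0$), but as written this case is missed; (ii) when $\real(T_0(\infty))=1/2$ your points lie on $i\mR$, and, as you note, you need $T_0(i\omega)\to T_0(\infty)$ along the axis. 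Your reading of the hypothesis is consistent with the paper's own usage (the proof of Theorem~\ref{thm:dist_to_cut} already invokes $\real(T(s))<1/2-\epsilon$ for all $s\in\bar{\mC}_+$ with $|s|>M$), so this is an interpretation point rather than an error, but the paper's $\delta$-perturbation shows how to avoid the issue entirely, and adopting it would also dispose of the $T_0(\infty)=1$ case in one stroke.
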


\begin{proof}
See Appendix~\ref{app:proof_lem_T_0_infty}.
\end{proof}

Note that the function $\absfuncwzero(\omega)$ is log integrable for sufficiently small $\bar\tau>0$. Thus we can follow the same procedure as in Section~\ref{sec:lower_bound_basic} and define, via the representation \eqref{eq:outerrepr}, an outer function $\outerfuncwzero(s)$ with the property $|\outerfuncwzero(i\omega)|=\absfuncwzero(\omega)$ for almost all $\omega\in \mR$. Consequently, we are left with the problem to find a $\hat T$ such that
\begin{equation*}
\|\hat T \outerfuncwzero \|_{\Hinf}<1
\; \text{and} \;
\begin{cases}
\hat T(p_j)=1-T_0(p_j),  &j = 1, \ldots, n, \\
\hat T(z_j)=-T_0(z_j),  &j = 1, \ldots, m,
\end{cases}
\end{equation*}
which, in turn,  is equivalent to 
\begin{subequations}\label{eq:new_interpolation}
\begin{align}
& \|\tilde T\|_{\Hinf} < 1, \text{ and}  \label{eq:new_interpolation_a}\\
& \tilde T(p_j)=(1-T_0(p_j))\outerfuncwzero(p_j),  &\!\! j = 1, \ldots, n, \label{eq:new_interpolation_b}\\
& \tilde T(z_j)=-T_0(z_j) \outerfuncwzero(z_j),  &\!\! j = 1, \ldots, m.\label{eq:new_interpolation_c}
\end{align}
\end{subequations}
In the same manner as in Section~\ref{sec:lower_bound_basic} we can then determine feasibility by checking whether the corresponding Pick matrix \eqref{eq:pick_matrix} is positive definite. A refined algorithm for computing a lower bound for the maximum delay margin is thus obtained by suitable changes in Algorithm~\ref{alg:first_method}.

\begin{remark}\label{rem:T01}
Note that if $T_0$ is selected so that it satisfies the interpolation conditions \eqref{eq:interpolation}, then the above construction is valid and trivially satisfied by $\tilde T\equiv 0$ for every $\bar\tau$ small enough so that $T_0(i\omega)$ does not intersect the cut. Hence the supremum of such $\bar\tau$ is the delay margin of the controller $K_0$ corresponding to $T_0$. This observation amounts to the procedure in \cite[Rem.~1]{middleton2007achievable} for computing $\mathscr{T}(P, K_0)$. This can be seen by using the transformation $s/(1-s)$, i.e., the inverse of \eqref{eq:mobius}, which brings the complementary sensitivity function $T_0$ back to the loop gain $PK_0$ in the Nyquist plot.
\end{remark}

\begin{remark}\label{rem:T02}
Let $\real(T_0(i\omega_0)) = 1/2$, for some $\omega_0 > 0$. Then an upper bound for which the approach is valid is given by $\bar{\tau} \leq 2\cot^{-1}(-2 \cdot \imag(T_0(i\omega_0)) / \omega_0$. An equivalent statement is true also for negative frequencies. Moreover, this means that any choice of $T_0$ such that $\real(T_0(i\omega)) = 1/2$ for some $\omega$ will put an upper limit on the lower bound of the maximum delay margin that a method based on solvability of \eqref{eq:new_interpolation} can achieve. Thus, when designing $T_0$ one should make sure that it does not cross the line $0.5+i\mR$ for frequencies with large module. 
\end{remark}

To conclude this section we note that by appropriately selecting $T_0$ the method proposed here, based on solvability of \eqref{eq:new_interpolation}, can achieve a lower bound on the maximum delay margin that is arbitrarily close to the true value.

\begin{prop}
For any $\bar{\tau} \in (0, \taumax)$ there exists a function $T_0(z) \in \HinfC$ such that there is a $\tilde T \in \HinfC$ which satisfies \eqref{eq:new_interpolation} for this $\bar{\tau}$.
\end{prop}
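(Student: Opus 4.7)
The plan is to turn Remark~\ref{rem:T01} into a quantitative statement: choose $T_0$ to be the complementary sensitivity function of a controller whose own delay margin strictly exceeds $\bar\tau$, so that $\tilde T\equiv 0$ solves \eqref{eq:new_interpolation}. Concretely, since $\bar\tau<\taumax$, the supremum in \eqref{eq:taumax_def} furnishes a controller $K_0\in\quotientfield$ together with $\bar\tau'$ satisfying $\bar\tau<\bar\tau'<\mathscr{T}(P,K_0)$, so that $K_0$ stabilizes $Pe^{-\tau s}$ for every $\tau\in[0,\bar\tau']$. I would take
\[
T_0(s):=\frac{P(s)K_0(s)}{1+P(s)K_0(s)},
\]
which lies in $\HinfC$ (because $K_0$ stabilizes $P$) and automatically hits the interpolation data $T_0(p_j)=1$ and $T_0(z_j)=0$ of \eqref{eq:interpolation}.

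Next I would verify that the construction of Section~\ref{sec:improved_method} is valid for this $T_0$ at delay $\bar\tau$. The algebraic identity $1+PK_0e^{-\tau s}=(1+PK_0)\bigl(1+T_0(s)(e^{-\tau s}-1)\bigr)$ converts $\Hinf$-stabilization of $Pe^{-\tau s}$ on $[0,\bar\tau']$ into condition~(1) of Theorem~\ref{thm:dist_to_cut} at $\bar\tau'$, yielding some $\varepsilon>0$ with $\dist(\cut{\omega\bar\tau'},T_0(i\omega))\ge\varepsilon$ for every $\omega\in\mR$. Monotonicity of the family $\cut{\omega\tau}$ in $\tau$ then carries the same lower bound from $\bar\tau'$ down to $\bar\tau$, and combining with \eqref{eq:phi_T_zero} gives $\absfuncwzero\le 1/\varepsilon$ uniformly on $\mR$. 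Consequently $\log\absfuncwzero$ is bounded, hence log-integrable against $d\omega/(1+\omega^2)$, and the outer function $\outerfuncwzero$ in \eqref{eq:outerrepr} is well defined.

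Finally, because $T_0$ satisfies \eqref{eq:interpolation}, the right-hand sides of \eqref{eq:new_interpolation_b} and \eqref{eq:new_interpolation_c} collapse to $(1-T_0(p_j))\outerfuncwzero(p_j)=0$ and $-T_0(z_j)\outerfuncwzero(z_j)=0$. Thus $\tilde T\equiv 0\in\HinfC$ trivially meets all three conditions of \eqref{eq:new_interpolation}, with the strict bound $\|\tilde T\|_{\Hinf}=0<1$ in place. The main obstacle will be the uniform gap $\dist(\cut{\omega\bar\tau'},T_0(i\omega))\ge\varepsilon$: extracting this from pointwise stability at each individual $\tau\in[0,\bar\tau']$ relies on interpreting ``stabilizes'' in the $\Hinf$ sense and on the monotonicity of the cuts, and it is precisely the strict slack $\bar\tau<\bar\tau'<\mathscr{T}(P,K_0)$ that lets this uniform gap survive at the target delay $\bar\tau$ where \eqref{eq:new_interpolation} is solved.
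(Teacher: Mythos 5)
Your proposal is correct and follows essentially the same route as the paper: the paper picks $T_0$ as the complementary sensitivity of a controller whose delay margin exceeds $\bar\tau$ (phrased via a sequence $(K^{(n)})_n$ approaching the supremum in \eqref{eq:taumax_def}) and invokes Remark~\ref{rem:T01} to conclude that $\tilde T\equiv 0$ is feasible. You simply select a single such controller directly and spell out the justification behind Remark~\ref{rem:T01} (the uniform gap to the cuts via Theorem~\ref{thm:dist_to_cut}, monotonicity of $\cut{\omega\tau}$ in $\tau$, and well-posedness of $\outerfuncwzero$), which the paper leaves implicit.
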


\begin{proof}
Consider a sequence of controllers $(K^{(n)})_n$ such that  $\taumax(P) - \mathscr{T}(P, K^{(n)}) \leq c/n$, for some constant $c > 0$, i.e., a sequence that converges to the supremum in \eqref{eq:taumax_def}. Such a sequence always exists, and from this sequence we form the sequence of corresponding complementary sensitivity functions $(T^{(n)})_n$, i.e., $T^{(n)} := PK^{(n)}/(1 + PK^{(n)})$. Taking $T_0 = T^{(n)}$, by using the observation in Remark~\ref{rem:T01} we conclude that a feasible solution to \eqref{eq:new_interpolation} is $\tilde{T} \equiv 0$. Since this is true for all values of $n$, the conclusion follows.
\end{proof}

\subsection{\rev{A systems interpretation of $T_0$}}
\label{subsec:small_gain_interp}
\rev{The nominal complementary sensitivity function $T_0$ was introduced as a the center of the ball to which the analytic interpolant $\hat{T}$ is confined, as shown \eqref{eq:phi_T_zero} and \eqref{eq:Thatt_ball}. However, $T_0$ also has a systems theoretic interpretation. To this end, we first examine the case when $T_0(s) \equiv 0$, corresponding to the method derived in Section~\ref{sec:lower_bound_basic}. Now, consider the feedback interconnection between the systems $\Delta(s)$ and $T(s)$ as in Figure~\ref{fig:robust_blockdiagram}. By the small gain theorem, this feedback interconnection is internally stable if each system is internally stable and if $|\Delta(i\omega) T(i\omega) | < 1$ for all $\omega \in \mR$ \cite[Sec.~1.6]{glad2000control}. }%
\rev{With this in mind, }%
the method derived in Section~\ref{sec:lower_bound_basic}, which builds on the sufficient condition
\rev{\eqref{eq:nec_and_suff_cond}}, can be viewed as an application of the small gain theorem to $T(s)$ and the family of functions $\Delta(s) \in  \Omega:=\{1-e^{-\tau s} \mid \tau\in [0,\bar\tau]\}$. This\rev{, in turn,} is equivalent to applying the small gain theorem to $T(s)$ and the function $W_{\bar{\tau}}(s)$ in \eqref{eq:outerrepr}. This equivalence holds since, by definition, the magnitude $|W_{\bar{\tau}}(i\omega)|$ is the maximum magnitude  $|\Delta(i\omega)|$ for all $\Delta \in \Omega$ and $\omega\in\mR$.

A similar interpretation can be made for the method in Section~\ref{sec:improved_method}, \rev{which then also gives a systems theoretic interpretation of $T_0$. To do so, consider the interconnection in Figure~\ref{fig:robust_blockdiagram_T0}. In particular, note that as long as all signals in Figure~\ref{fig:robust_blockdiagram_T0} are bounded, then the signals in the points marked $a$ and $b$ are the same in both figures.
Therefore, by instead considering the two system $\hat\Delta(s)$ and $\hat T(s)$ we have that if both these systems are internally stable, then stability in the interconnection in Figure~\ref{fig:robust_blockdiagram_T0} implies that the interconnection in Figure~\ref{fig:robust_blockdiagram} is stable.
Moreover, this means that the family $\Omega$ of disturbances is replaced by $\hat\Omega:=\{\Delta(1- T_0\Delta)^{-1} \mid \Delta\in\Omega\}$, where we see that $T_0$ can be used to (partly) shape the disturbances, as long as the entire set is still internally stable.
}%
In view of Theorem~\ref{thm:dist_to_cut}, \rev{the latter}
means that the cuts corresponding to delay $\bar \tau$ must not intersect $T_0(i\omega)$.
This is equivalent to the right hand side of \eqref{eq:nec_and_suff_cond_modified} being invertible in $\HinfC$ for all $\tau\in[0,\bar \tau]$. This is also the reason why the method can not be applied for large delays $\bar \tau$ when $T_0$ intersects the line $0.5+i\mR$, as explained in Remark~\ref{rem:T02}.
\rev{Finally, the method in Section~\ref{sec:improved_method} can therefore be interpreted as applying the small gain theorem to $\hat T(s)$ and the family of systems $\hat\Omega$.
}

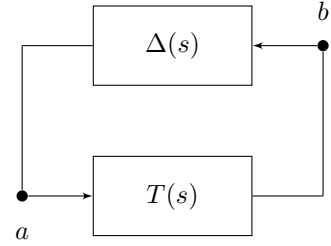
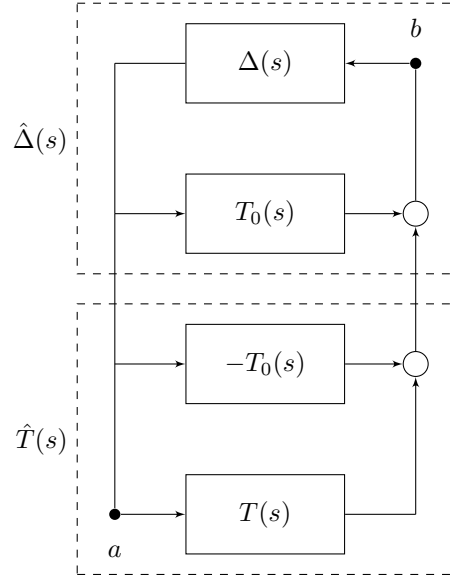
\begin{figure}[tbh]
\begin{center}
\begin{subfigure}{\columnwidth}
  
  \tikzstyle{int}=[draw, minimum size=2em]
  \tikzstyle{init} = [pin edge={to-,thin,black}]
  \tikzstyle{block} = [draw, rectangle, 
    minimum height=3em, minimum width=6em]
  \tikzstyle{sum} = [draw, circle, node distance=1cm]
  \tikzstyle{input} = [coordinate]
  \tikzstyle{output} = [coordinate]
  \tikzstyle{pinstyle} = [pin edge={to-,thin,black}]
  \tikzstyle{dott} =[circle,fill,inner sep=1.5pt]
  
  \begin{center}  
  \begin{tikzpicture}[auto, node distance=2cm,>=latex']
    \node [input, name=upperleft] {};
    \node [block, right of=upperleft] (disturbance) {$\Delta(s)$};
    \node [dott, right of=disturbance] (upperright) {};
    \node [input, below of=upperright] (lowerright) {};
    \node [block, left of=lowerright] (system) {$T(s)$};
    \node [dott, below of=upperleft] (lowerleft) {};

    \node [above=4pt of upperright] (pointb) {$b$};    
    \node [below=6pt of lowerleft] (pointa) {$a$};

    \draw [<-] (disturbance) -- (upperright);
    \draw [-] (upperright) -- (lowerright);
    \draw [-] (lowerright) -- (system);
    \draw [<-] (system) -- (lowerleft);
    \draw [-] (lowerleft) -- (upperleft);
    \draw [-] (upperleft) -- (disturbance);

  \end{tikzpicture}
  \subcaption{Block diagram interpretation of the delay uncertainty as presented in Section~\ref{sec:lower_bound_basic}. Here, $\Delta(s) \in  \Omega:=\{1-e^{-\tau s} \mid \tau\in [0,\bar\tau]\}$.}
  \label{fig:robust_blockdiagram}
  \end{center}
  \vspace{12pt}
\end{subfigure}
\begin{subfigure}{\columnwidth}

  \tikzstyle{int}=[draw, minimum size=2em]
  \tikzstyle{init} = [pin edge={to-,thin,black}]
  \tikzstyle{block} = [draw, rectangle, 
    minimum height=3em, minimum width=6em]
  \tikzstyle{sum} = [draw, circle]
  \tikzstyle{input} = [coordinate]
  \tikzstyle{output} = [coordinate]
  \tikzstyle{pinstyle} = [pin edge={to-,thin,black}]
  \tikzstyle{dott} =[circle,fill,inner sep=1.5pt]

  \begin{center}
  \begin{tikzpicture}[auto, node distance=2cm,>=latex']
    \node [input, name=upperleft] {};
    \node [block, right of=upperleft] (disturbance) {$\Delta(s)$};
    
    \node [dott, right of=disturbance] (upperright) {};
    \node [above=4pt of upperright] (pointb) {$b$};    
    
    \node [sum, below of=upperright] (uppermiddleright) {};
    \node [sum, below of=uppermiddleright] (lowermiddleright) {};
    \node [input, below of=lowermiddleright] (lowerright) {};
    \node [block, left of=lowerright] (system) {$T(s)$};
    
    \node [block, below of=disturbance] (T0) {$T_0(s)$};
    \node [block, below of=T0] (mT0) {$-T_0(s)$};
    
    \node [dott, left of=system] (lowerleft) {};
    \node [below=6pt of lowerleft] (pointa) {$a$};

    \coordinate [below of=upperleft] (uppermiddleleft);
    \coordinate [below of=uppermiddleleft] (lowermiddleleft);

    \draw [draw,->] (uppermiddleleft) -- (T0);
    \draw [draw,->] (T0) -- (uppermiddleright);
    \draw [draw,->] (lowermiddleleft) -- (mT0);
    \draw [draw,->] (mT0) -- (lowermiddleright);
    
    \draw [<-] (disturbance) -- (upperright);
    \draw [-] (upperright) -- (uppermiddleright);
    \draw [<-] (uppermiddleright) -- (lowermiddleright);
    \draw [<-] (lowermiddleright) -- (lowerright);
    \draw [-] (lowerright) -- (system);
    \draw [<-] (system) -- (lowerleft);
    \draw [-] (lowerleft) -- (lowermiddleleft);
    \draw [-] (lowermiddleleft) -- (uppermiddleleft);
    \draw [-] (uppermiddleleft) -- (upperleft);
    \draw [-] (upperleft) -- (disturbance);

    \draw [dashed] (-0.5,0.8) -- (4.5,0.8) -- (4.5,-2.8) -- (-0.5,-2.8) -- node {$\hat{\Delta}(s)$} (-0.5,0.8);
    \draw [dashed] (-0.5,-6.8) -- (4.5,-6.8) -- (4.5,-3.2) -- (-0.5,-3.2) -- node[left] {$\hat{T}(s)$} (-0.5,-6.8);

  \end{tikzpicture}
  \subcaption{Block diagram interpretation of the shift $T_0$. Here, $\hat{\Delta}(s) \in \hat\Omega:=\{\Delta(1- T_0\Delta)^{-1} \mid \Delta\in\Omega\}$, where $\Omega$ is as in Figure~\ref{fig:robust_blockdiagram}.}
  \label{fig:robust_blockdiagram_T0}
  \end{center}
\end{subfigure}
\caption{Block diagram representations of the delay uncertainties in Sections~\ref{sec:lower_bound_basic} and \ref{sec:improved_method}.}
\label{fig:robust_blockdiagram_both}
\end{center}
\end{figure}

\section{Considerations for controller design}
\label{sec:control_implementation} 

There are certain problems with the controller implementation that need attention. In particular, we want to guarantee that the complementary sensitivity function satisfies $T\in \HinfC$ and corresponds to a finite-dimensional controller.

\subsection{Solving the problem of $T$ having a pole in $s=0$}\label{subsec:pole_in_zero}

The approaches in Sections \ref{sec:lower_bound_basic} and \ref{sec:improved_method} gives a complementary  sensitivity function given by
\begin{equation}
\label{eq:Ttilde2T}
T(s)=\tilde{T}(s) \outerfuncwzero (s)^{-1}+T_0(s).
\end{equation}
The function $\outerfuncwzero$ can be inverted in $\mC_+$ since it is outer.  However, since $\outerfuncwzero(0)=0$, $T$ has a pole in $s = 0$, and therefore the closed loop system is not stable (cf. \cite[p.~37]{doyle1992feedback}). This can be rectified by replacing $\phi_{\bar{\tau}}$ by
\begin{equation}\label{eq:phi_lowerbound}
\phi_{\bar{\tau}, {\varepsilon}}(\omega) =\max(\varepsilon, \phi_{\bar{\tau}}(\omega))
\end{equation}
for some $\varepsilon > 0$, when defining $\outerfuncwzero$ in \eqref{eq:outerrepr}. This results in a stable system and, by continuity, as $\varepsilon \to 0$ we can obtain a maximum delay margin estimate arbitrary close to the estimate obtained with $\phi_{\bar{\tau}}$.

\subsection{Obtaining a rational $\tilde T$}
\label{subsec:rational_T_tilde}

Selecting $\bar \tau$ to be the supremum for which \eqref{eq:new_interpolation} holds gives rise to a corresponding Pick matrix \eqref{eq:pick_matrix} that is singular and a unique solution $\tilde T$ which is a Blaschke product \cite[pp.~5-9]{garnett2007bounded}. For such a solution, $\| \tilde T \|_{\Hinf} =1$ and thus it does not satisfy \eqref{eq:new_interpolation_a}. Therefore, the corresponding $T$ obtained via \eqref{eq:Ttilde2T} may not have delay margin $\bar\tau$. However, for any $\bar \tau$ smaller than the supremum the Pick matrix is positive definite and the analytic interpolation problem \eqref{eq:new_interpolation} has infinitely many rational solutions \cite{byrnes2001ageneralized}, \cite{fanizza2007passivity}. One of these is the so-called maximum entropy solution \cite{georgio2003kullback}, \cite{blomqvist2005optimization}, which can be obtained by the Nevanlinna-Pick algorithm \cite{Walsh1956book}.

\subsection{Model reduction of the weight for rational control implementation} \label{sec:quasi_convex}

Given a function $\tilde T$ that solves \eqref{eq:new_interpolation}, from \eqref{eq:def_T} and \eqref{eq:Ttilde2T} the stabilizing controller is given by
\begin{equation}
\label{eq:K}
K=P^{-1}(-\tilde{T} +(1-T_0)W_{\bar{\tau}})^{-1}(\tilde{T} +T_0W_{\bar{\tau}}).
\end{equation}
However, $\outerfuncwzero$,  defined by  \eqref{eq:outerrepr}, will typically not be rational. Therefore, from \eqref{eq:K} we see that, even if $\tilde T$ and $T_0$ are rational, $K$ is generally not. To overcome this, we propose to do rational approximation of the function $\outerfuncwzero$ by using quasi-convex optimization as in \cite{karlsson2008stability-preserving}. We can still guarantee that the 
\rev{rational approximation can be used to compute a lower bound on the delay margin} by enforcing that the magnitude of the obtained rational approximation is an overestimate of $|\outerfuncwzero(i\omega)|$, i.e., of $\phi_{\bar \tau}$.
This can be formulated as
the problem to find a triplet $(\epsilon, a,b)\in \mR_+\times\mS_{n_a}\times\mS_{n_b}$ which minimizes $\epsilon$ subject to the constraint
\begin{equation}\label{eq:ba_bound}
\phi_{\bar \tau}(\omega)\le \left|\frac{b(i\omega)}{a(i\omega)}\right| \le (1+\epsilon)\phi_{\bar \tau}(\omega), \quad \forall \,\omega \in \mR ,
\end{equation}
where $\mS_n$ is the set of stable%
\footnote{A polynomial is stable if it is nonzero in $\mC_+$.}
polynomials of degree bounded by $n$. Note that this may be written as
\begin{align}\label{eq:quasiconvex}
\min_{\substack{\epsilon \in \mR_+ \\ A \in \mT_{n_a} \\ B \in\mT_{n_b}}} \quad & \epsilon\\
\mbox{ subject to}\quad &  \phi_{\bar \tau}(\omega)^2A(i \omega)\le B(i\omega) \le (1+\epsilon)^2\phi_{\bar \tau}(\omega)^2A(i \omega)\nonumber\\ 
&\mbox{for } \omega\in \mR\nonumber,
\end{align}
where $\mT_n$ is the set of nonnegative trigonometric polynomials of degree bounded by $n$. Note that the nonnegatvity of $B$ and $A$ can be enforced by using an LMI formulation \cite[Thm.~2.5]{dumitrescu2007positive}. After solving \eqref{eq:quasiconvex}, one can recover the rational approximation of $\outerfuncwzero$ as $\outerfuncwzerotilde:=b/a$ where $b$ and $a$ are the stable spectral factors of $B$ and $A$, respectively. The optimization problem \eqref{eq:quasiconvex} is quasi-convex since the feasibility problem is convex for any fixed $\epsilon$, and it can be thus be solved by, e.g., bisection over $\epsilon$.

By using the rational weight $\outerfuncwzerotilde$ instead of $\outerfuncwzero$ \rev{when solving problem \eqref{eq:new_interpolation}}, the resulting control system obtained in \eqref{eq:K} will be of bounded degree and thus possible to implement using standard methods. \rev{Note that if $\bar \tau$ is such that \eqref{eq:new_interpolation} has a solution, there is an $\epsilon>0$ so that \eqref{eq:new_interpolation} also has a solution for any outer function $\tilde{W}_{\bar\tau}=b/a$ satisfying \eqref{eq:ba_bound}. Since any given error bound $\epsilon>0$ can be achieved by selecting $n_a$ and $n_b$ large enough, this approach can always be used to achieve a rational controller with delay margin arbitrary close to the supremum $\bar \tau$ for which \eqref{eq:new_interpolation} has a solution.}  

\rev{
\begin{remark}
The idea of overestimating the weight $\phi_{\bar\tau}$ in \eqref{eq:w_tau} was used in \cite[Sec.~3]{wang1994representation}, \cite[Sec.~II.C]{qi2017fundamental}, and closed-form rational approximations have been derived. However, these expressions are only valid for $T_0(s) \equiv 0$ in \eqref{eq:weight_with_T0} and for $T_0(s) \not\equiv 0$ we therefore need the method described above.
\end{remark}
}

\subsection{Considerations for strictly proper $T$}
\label{sec:strict_proper_T}

By Lemma~\ref{lem:T_0_infty}, we must have that $\real(T_0(\infty)) < 1/2$. However, this condition is not enough to ensure that it is possible to obtain a $T$ that corresponds to a strictly proper closed loop system, i.e., a $T$ such that $T(\infty) = 0$. To ensure the latter we need a $T_0$ such that $|T_0(\infty)| \leq |1/2 - \real(T_0(\infty))|$, or equivalently such that $\imag(T_0(\infty))^2 + \real(T_0(\infty)) \leq 1/4$. To ensure that indeed $T(\infty) = 0$, we need $T_0(\infty) = 0$ and to modify the weight $\phi_{\bar{\tau}}$ so that it enforces a ``roll-off'' at an appropriate speed, i.e., modify $\phi_{\bar{\tau}}$ so that for sufficiently large frequencies it goes to infinity with the appropriate speed. Finally, note that this type of ``roll-off'' can also be achieved by selecting an improper approximation in Section~\ref{sec:quasi_convex}, in which case the upper bound in the approximation needs to be removed for large enough frequencies.

\section{Iterative improvement of $T_0$}
\label{sec:homotopy}

In Section~\ref{sec:improved_method} we introduced a nominal complementary sensitivity function $T_0$ as the center of a frequency dependent disc containing $T(i\omega)$ for each $\omega$. However, how to select a $T_0 \not \equiv 0$ in order to achieve a better lower bound on the maximum delay margin is a nontrivial question. In this section we describe one iterative heuristic for selecting $T_0$.

To this end, we note that if we can solve the interpolation problem \eqref{eq:suff_cond}, then for any $\alpha \in [0, 1]$ we can also solve the interpolation problem
\begin{align*}
\inf_{T \in \Hinf} & \quad \|T(i \omega) \phi_{\bar{\tau}}(\omega)  \|_{\Linf} \\
\text{subject to}  & \quad T(p_j) = \alpha,\quad j = 1,\ldots, n , \\
                   & \quad T(z_j)= 0,\quad j = 1,\ldots, m.
\end{align*}
In fact, if $T$ solves \eqref{eq:suff_cond}, then $\alpha T$ solves the above problem, and $\|\alpha T(i \omega) \phi_{\bar{\tau}}(\omega)  \|_{\Linf} \leq \|T(i \omega) \phi_{\bar{\tau}}(\omega)  \|_{\Linf} < 1$. Moreover, for $\alpha = 0$, $T \equiv 0$ is a solution. This means that $\alpha$ can be interpreted as a homotopy parameter. Using this insight, we propose the heuristic method in Algorithm~\ref{alg:T0_iterative} for selecting $T_0$ and for computing a lower bound on the maximum delay margin. Note that step 5 involves a rational approximation, which, however, is still guaranteed to fulfill the interpolation conditions in step 4. This approximation method, which is quasi-convex, was developed for the discrete time setting in \cite{karlsson2008stability-preserving}, and for completeness it is described below in subsection~\ref{sec:rational_approx_interpolant} in our present continuous-time setting. The approximation step is done in order to keep the degree of the solution constant, independently of the number of steps $N$, in the homotopy method.

\renewcommand{\algorithmicrequire}{\textbf{Input:}}
\renewcommand{\algorithmicensure}{\textbf{Output:}}

\begin{algorithm}[tb]
\caption{Iterative heuristic method for selecting $T_0$ and computing a lower bound on the maximum delay margin.}
\label{alg:T0_iterative}
\begin{algorithmic}[1]
\REQUIRE Unstable poles $p_j$,  $j = 1, \ldots, n$, and nonminimum phase zeros $z_j$,  $j = 1, \ldots, m$, of the plant $P$. Limits $n_b$ and $n_a$ on the degree of the numerator and denominator, respectively, of the rational approximation of the weight $\tilde \outerfuncwzero$. Limit $n_{T} \geq n+m$ on the degree of $T_0$. Initial guess $\hat{\tau}_+^{(0)}$. Number of homotopy steps $N$. Initial $T^{(0)}$.
\STATE $\tau_- = 0.$
\FOR{$k = 1, \ldots, N$}
\STATE $\alpha^{(k)} = k/N$, \quad $T_0^{(k)} = T^{(k - 1)}$, \quad $\tau_+^{(k)} = \hat{\tau}_+^{(k-1)}$
\STATE Use modified version of Algorithm~\ref{alg:first_method} from Section~\ref{sec:improved_method}, initialized with the upper bound $\tau_+^{(k)}$ and in each step using a rational approximation $\outerfuncwzerotilde$ of the weight function $\outerfuncwzero$ computed as outline in Section~\ref{sec:quasi_convex}, to solve
\vspace{-6pt}
\begin{align*}
\max_{\substack{\tau \, \in \, \mR_+ \\ T \, \in \, \Hinf}} & \quad \tau \\
\text{s.t.} & \quad \| (T - T_0^{(k)}) \phi_{\tau} \|_{\rev{\Linf}} < 1 \\
& \quad T(p_j) = \alpha^{(k)}, \text{ for } j = 1, \ldots, n, \\
& \quad T(z_j) = 0, \text{ for } j = 1, \ldots, m.
\end{align*}
Let $\hat{\tau}_+^{(k)}$, $T^{(k - 1/2)}$ be the maximizing arguments.
\STATE $T^{(k)} =$ rational approximation, with degree bounded by $n_{T}$, of the interpolant $T^{(k - 1/2)}$, computed as described in Section~\ref{sec:rational_approx_interpolant}.
\ENDFOR
\STATE $\bar{\tau} = \hat{\tau}_+^{(N)}$, \quad $T = T^{(N - 1/2)}$, \quad $T_0 = T_0^{(N)}$.
\ENSURE $\bar{\tau}$, \; $T$, \; $T_0$.
\end{algorithmic}
\end{algorithm}

\begin{remark}
In the examples to be presented in Section~\ref{sec:example}, Algorithm~\ref{alg:T0_iterative} sometimes encounters numerical problems. A remedy to this seems to be to change step 5 in Algorithm~\ref{alg:T0_iterative} to
\begin{align*}
\tilde{T}^{(k)} & = \text{rational approximation of the interpolant } T^{(k - 1/2)}, \\
& \phantom{=} \text{ computed as described in Section~\ref{sec:rational_approx_interpolant}}, \\
T^{(k)} & = \gamma \tilde{T}^{(k)},
\end{align*}
for some $\gamma \in (0, 1]$. This seems to improve the conditioning when computing the weight approximation $\outerfuncwzerotilde$ in later iterations.
\end{remark}

\subsection{Rational approximation of interpolant}
\label{sec:rational_approx_interpolant}

Without the degree reduction in step 5 of Algorithm 2 the degree of $T$ would increase in each iteration. To keep the degree  bounded we use a continuous-time version of the procedure introduced in \cite{karlsson2008stability-preserving}, which allows for preserving both a set of interpolation conditions as well as the stability of the complementary sensitivity function. The procedure is based on the optimization problem
\begin{subequations}\label{eq:opt2}
\begin{align}
\min_{T\in \rev{\Hinf}(\mC_+)} &\|\sigma T\|_{\HtwoC}\label{eq:opt2a}\\
\mbox{ subject to } &T(s_k)=w_{k}, \mbox{ for } k=0,\ldots, n,\label{eq:opt2b}
\end{align}
\end{subequations}
where $\sigma$ is an outer weight. A function $T\in \HinfC$ is a minimizer of \eqref{eq:opt2} if it satisfies \eqref{eq:opt2b} and $\sigma T \in \script{K}$, where 
\begin{align}
\script{K}:=\left\{\sum_{k=0}^n \alpha_k \frac{1}{s+\bar s_k} \; \Big| \; \alpha_k\in \mC,\, \mbox{ for }k=0,\ldots, n\right\}\label{eq:coinvariant}
\end{align}
is the co-invariant subspace. This follows by representing \eqref{eq:opt2b} as an integral constraint using Cauchy integral formula, see, e.g., \cite[Thm.~10.15]{rudin1987real}, and then applying the projection theorem \cite[p.~65, Thm.~2]{luenberger1969optimization}. For the discrete time counterpart, i.e., the corresponding problem on the unit disc $\mD$, see \cite[Sec.~III]{karlsson2008stability-preserving}.

Next, we would like to use the weight $\sigma$ as a tuning parameter in order to achieve solutions $T$ of reduced order. To this end, let $\sigma$ be an outer function and let $T$ be the corresponding solution to \eqref{eq:opt2}. Then by selecting an outer weight $\hat \sigma$ in a suitable class and close to $\sigma$, we can guarantee that the optimal solution $\hat T$ of \eqref{eq:opt2} with weight $\hat \sigma$ is close to $T$ and is of low degree.

In particular, given $T$ satisfying \eqref{eq:opt2b} let $\sigma$ be an outer function such that $\sigma T\in \script{K}$. Such $\sigma$ is always possible to find  if the inner part of $T$ is a Blaschke product of degree bounded by $n$ (see \cite[Thm.~6]{karlsson2008stability-preserving}). Then $\sigma$  can be selected as any function $a/T$ where $a\in \script{K}$ has zeros in all zeros of $T$ in $\mC_+$ (including multiplicity). In order to find a low degree approximation $\hat T$ of $T$ we proceed by finding a rational approximating $\hat \sigma$ of $\sigma$. This can be done via the quasi-convex optimization problem as in Section~\ref{sec:quasi_convex}. Finally we solve \eqref{eq:opt2} using the weight $\hat \sigma$ in order to get the approximation $\hat T$. Note that if the weight is of the form $\hat \sigma=\hat \sigma_1\hat \sigma_2$ where $\hat \sigma_1\in \script{K}$, then the degree of $\hat T$ is bounded by $n+\deg(\hat \sigma_2)$. The following theorem ensures that this is a good approximation if the weight $\hat \sigma$ is sufficiently close to $\sigma$.

\begin{thm}\label{thm:approximation}
Let $\sigma\in \HinfC$ be an outer function such that $|\sigma(i\omega)|$ is bounded away from zero on $\mR$, and let $T$ be the corresponding solution to \eqref{eq:opt2}. Moreover, let $(\sigma_\ell)_{\ell=1}^\infty$ be a sequence of outer functions and let $(T_\ell)_{\ell=1}^\infty$ be the corresponding optimal solutions to \eqref{eq:opt2}. Then $T_\ell\to T$ in $\HinfC$ as $\ell\to \infty$ if
\begin{align}
&|\sigma_\ell(i\omega)|\to |\sigma(i\omega)|\mbox{ in } {\Linf(\mR)} \mbox{ as } \ell\to \infty\label{eq:sigma1}
\end{align}
and there exists a constant $M$ such that
\begin{subequations}\label{eq:derbound}
\begin{align}\left\|\frac{\partial |\sigma(i\omega)|^2}{\partial \omega}\right\|_{\Ltwo(\mR,(1+\omega^2)d\omega)}&\le M\label{eq:derbound1}\\
\left\|\frac{\partial |\sigma_\ell(i\omega)|^2}{\partial \omega}\right\|_{\Ltwo(\mR,(1+\omega^2)d\omega)}&\le M, \mbox{ for } \ell=1,2,\ldots .\label{eq:derbound2}
\end{align}
\end{subequations}
\end{thm}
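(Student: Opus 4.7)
The plan is to exploit the explicit parameterization of the minimizers. As noted just before the theorem statement, the optimal solutions satisfy $\sigma T \in \script{K}$ and $\sigma_\ell T_\ell \in \script{K}$, so one can write $T = p/\sigma$ and $T_\ell = p_\ell/\sigma_\ell$, where $p,p_\ell \in \script{K}$ have the form $\sum_{k=0}^n \alpha_k/(s+\bar s_k)$. Imposing the interpolation conditions \eqref{eq:opt2b} yields a Cauchy-type linear system for the coefficient vectors whose matrix $[(s_j+\bar s_k)^{-1}]_{j,k}$ is invertible because the nodes $s_j$ are distinct, and whose right-hand sides are $[\sigma(s_j)w_j]_j$ and $[\sigma_\ell(s_j)w_j]_j$ respectively. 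Hence everything reduces to comparing $\sigma$ and $\sigma_\ell$ at the interpolation nodes and on the imaginary axis.

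First I would establish pointwise convergence $\sigma_\ell(s)\to \sigma(s)$ on $\mC_+$ from the hypothesis \eqref{eq:sigma1}. Since $|\sigma|$ is bounded away from zero and $|\sigma_\ell|\to|\sigma|$ in $\Linf(\mR)$, eventually $|\sigma_\ell|$ is uniformly bounded away from zero, so $\log|\sigma_\ell|\to\log|\sigma|$ in $\Linf(\mR)$. Substituting into the integral representation \eqref{eq:outerrepr} for each outer function, and using dominated convergence (the kernel is $O(1/\omega^2)$ for fixed $s\in\mC_+$), yields $\sigma_\ell(s)\to\sigma(s)$ pointwise on $\mC_+$. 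Continuous dependence of the Cauchy system on its right-hand side then gives convergence of the coefficient vectors, and hence $p_\ell\to p$ in the finite-dimensional space $\script{K}$, which is equivalent to $p_\ell\to p$ in $\Linf(i\mR)$.

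The remaining task is to upgrade these facts to $T_\ell\to T$ in $\HinfC$. Writing
\[
T_\ell - T \;=\; \sigma_\ell^{-1}(p_\ell - p) \;+\; p\bigl(\sigma_\ell^{-1} - \sigma^{-1}\bigr),
\]
and using $\|T_\ell - T\|_{\Hinf} = \|T_\ell - T\|_{\Linf(i\mR)}$, the first term vanishes on $i\mR$ because $\sigma_\ell^{-1}$ is uniformly bounded there and $p_\ell\to p$ in $\Linf(i\mR)$. The second term, however, requires the stronger statement $\sigma_\ell\to\sigma$ in $\Linf(i\mR)$, not merely convergence of the moduli.

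This last step is the main obstacle. An outer function recovers its phase from its modulus through a Hilbert-transform-type operation, and the Hilbert transform is not bounded on $\Linf$, so $L_\infty$ convergence of $|\sigma_\ell|$ alone need not imply $L_\infty$ convergence of $\sigma_\ell$. This is precisely the role of the derivative bounds \eqref{eq:derbound}: they provide a uniform weighted Sobolev estimate on the functions $|\sigma_\ell|^2$ (essentially an $H^1$-type bound), which, together with the $\Linf$ convergence of the moduli, suffices to pass continuity through the Hilbert transform. The expected route is a compactness/interpolation argument: the weighted $\Ltwo$-derivative bound together with $\Linf$ convergence yields locally uniform convergence, while the $(1+\omega^2)$-weight controls the large-$\omega$ behaviour where the Hilbert transform is most delicate, giving uniform convergence $\sigma_\ell\to\sigma$ on $i\mR$. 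Combined with $p_\ell\to p$ from the previous step, this delivers $T_\ell\to T$ in $\HinfC$.
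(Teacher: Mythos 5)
Your overall architecture is the same as the paper's: both arguments use the optimality characterization $\sigma T\in\script{K}$ to write $T=a/\sigma$ and $T_\ell=a_\ell/\sigma_\ell$ with $a,a_\ell\in\script{K}$, determine the coefficients from the nonsingular linear system induced by \eqref{eq:opt2b}, and thereby reduce the theorem to convergence of $\sigma_\ell$ to $\sigma$. Your shortcut of getting convergence of the right-hand sides $w_k\sigma_\ell(s_k)$ from dominated convergence in \eqref{eq:outerrepr} is acceptable, provided you address the unimodular-constant ambiguity of an outer function with prescribed boundary modulus — either normalize the phase (the paper fixes $\arg\sigma_\ell(1)=\arg\sigma(1)$, which is harmless since rotating $\sigma_\ell$ does not change $T_\ell$) or note that the constant cancels in $a_\ell/\sigma_\ell$. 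The splitting $T_\ell-T=\sigma_\ell^{-1}(a_\ell-a)+a\bigl(\sigma_\ell^{-1}-\sigma^{-1}\bigr)$ and the treatment of its first term are fine.

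The genuine gap is exactly the step you flag as ``the main obstacle'' and then leave unproved: that \eqref{eq:sigma1} together with the uniform bounds \eqref{eq:derbound} yields $\sigma_\ell\to\sigma$ (equivalently $\sigma_\ell^{-1}\to\sigma^{-1}$) uniformly on $i\mR$. Invoking ``a compactness/interpolation argument'' is not a proof: one must convert $\Linf$ convergence of the moduli plus a uniform (not convergent) weighted $\Ltwo$ bound on $\partial_\omega|\sigma_\ell(i\omega)|^2$ into uniform convergence of the conjugate-function (phase) part, and the failure of the Hilbert transform to be bounded on $\Linf$ is precisely what makes this nontrivial; a bound on the derivatives gives equicontinuity-type control but no obvious limit identification for the phases without real work. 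This implication is the entire analytic content of the theorem, and it is where the hypotheses \eqref{eq:derbound} enter; the paper itself does not prove it from scratch but isolates it as a lemma ($\sigma_\ell\to\sigma$ and $\sigma_\ell^{-1}\to\sigma^{-1}$ in $\HinfC$) and closes it by citing the continuity theorem for spectral factorization of \cite{anderson1985continuity}, valid on the circle under exactly such a uniform $\Ltwo$ derivative bound, transported to the half-plane via $z\mapsto(1-z)/(1+z)$ — under which the weight $(1+\omega^2)\,d\omega$ in \eqref{eq:derbound} is what the disc condition becomes. Without either supplying such an argument or citing a result of this type, your proof is incomplete at its decisive point.
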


\begin{proof}
See Appendix~\ref{app:approximation}.
\end{proof}

\section{Optimization for multiple stability margins}
\label{sec:sim_margins} 

When optimizing one stability margin, it is common that other stability margins deteriorate. Therefore, in control design it is natural to impose criteria on several stability margins. A first approach to this is to require that each of the margins satisfy given lower bounds. This amounts to finding a complementary sensitivity function that avoids the union of the forbidden areas for the stability margins.

We can extend our method to compute a lower bound for the maximum delay margin, given fixed lower bounds $k$ and $\varphi$ on the gain and phase margin, respectively. This corresponds to disturbances $\Delta$ in the set
\[
\{ \kappa \mid \kappa \in [1, k] \} \cup \{ e^{-i\theta} \mid \theta \in [-\varphi, \varphi] \} \cup \{ e^{-s\tau} \mid \tau \in [0, \bar{\tau}] \},
\]
with corresponding forbidden areas depicted in Figure~\ref{subfig:nyquist_gain_phase_delay_indep}. To this end, define the corresponding weight functions, representing the inverse of the distance from $T_0(i\omega)$ to the respective forbidden area, as
\begin{subequations}
\begin{align}
\phi^{\rm gain}_k(\omega)^{-1} & = \dist([-\infty,-1/(k-1)], T_0(i\omega)) \label{eq:phi_gain} \\
\phi^{\rm phase}_{\varphi}(\omega)^{-1} & = \dist(\cut{\varphi} \cup \cut{-\varphi}, T_0(i\omega)) \label{eq:phi_phase} \\
\phi^{\rm delay}_{\bar{\tau}}(\omega)^{-1} & = \dist(\cut{\omega\bar{\tau}}, T_0(i\omega)) \label{eq:phi_delay}
\end{align}
\end{subequations}
where we for clarity use $\phi^{\rm delay}_{\bar{\tau}}(\omega)$ to denote the weight \eqref{eq:phi_T_zero} corresponding to the delay margin. The smallest distance to the union of the forbidden areas is represented by the maximum of the three functions, i.e.,
\begin{equation}\label{eq:indep_margin_weight}
\phi_{\bar{\tau}}(\omega) = \max(\phi^{\rm gain}_{\kappa}(\omega), \phi^{\rm phase}_{\varphi}(\omega), \phi^{\rm delay}_{\bar{\tau}}(\omega)),
\end{equation}
and as before, $T_0(i\omega)$ must not intersect any of these areas. The procedure is now the same as in Section~\ref{sec:improved_method}, i.e., to iteratively check solvability of \eqref{eq:new_interpolation} where now $W_{\bar{\tau}}$ is the outer extension of $\phi_{\bar{\tau}}(\omega)$, as defined in \eqref{eq:indep_margin_weight}, via \eqref{eq:outerrepr}.

This procedure is straightforward to implement. However, as noted in Section~\ref{subsec:gain_and_phase_margin} this may lead to feedback designs that are not robust to simultaneous disturbances. The latter corresponds to disturbances $\Delta$ in the set
\[
\left\{ \kappa e^{-i\theta} e^{-s\tau} \mid \kappa \in [1, k], \; \theta \in [-\varphi, \varphi], \; \tau \in [0, \bar{\tau}] \right\}.
\]
For the complementary sensitivity function this results in the forbidden region 
$\mathfrak{D}(\omega)$ given by
\[
\left\{\frac{\kappa e^{-i\theta} e^{-i\omega\tau}}{1 + \kappa e^{-i\theta} e^{-i\omega\tau}}    \mid    \kappa \in [1, k], \ \theta \in [-\varphi, \varphi],  \tau \in [0, \bar{\tau}] \right\}.
\]
The forbidden region $\mathfrak{D}(\omega)$ is considerably larger than the one obtained with independent margins, as can be seen by comparing Figures~\ref{subfig:interpolation_gain_phase_delay_indep} and \ref{subfig:interpolation_gain_phase_delay}.
The corresponding weight function is then 
\[
\phi_{\bar{\tau}}(\omega)^{-1} = \dist\left(\mathfrak{D}(\omega), T_0(i\omega)
\right),
\]
and we can apply the proposed method also in this setting.
To compute $\dist\left(\mathfrak{D}(\omega), T_0(i\omega)\right)$, first note that it can be  easily checked if $T_0(i\omega) \in \mathfrak{D}(\omega)$. This is done in the Nyquist domain by checking if $\Gamma \circ T_0(i\omega) \in \Gamma \circ \mathfrak{D}(\omega)$, where $\Gamma(s) = s/(1-s)$ is the inverse of \eqref{eq:mobius} \rev{and $\circ$ denotes function composition}. Indeed, $\Gamma \circ \mathfrak{D}(\omega)$ has a simple representation in polar coordinates, cf. Figure~\ref{subfig:nyquist_gain_phase_delay}. If $T_0(i\omega) \not\in \mathfrak{D}(\omega)$, then the distance can be computed as the minimum of distances to each of the five arcs and lines that define the boundary of $\mathfrak{D}(\omega)$. Expressions for these can be derived from the equations in Appendix~\ref{app:sim_gain_phase}, and are left out for the sake of brevity.

\begin{remark}
A notable difference between the two types of aggregate margins is that for the independent margins the cut corresponding to the delay margin is contained in the cuts corresponding to the phase margin for sufficiently small frequencies, i.e., $\cut{\omega \bar{\tau}} \subset \cut{\varphi} \cup \cut{-\varphi}$   for $|\omega | \leq \varphi/\bar{\tau}$.
By contrast, for the simultaneous margins, the uncertainties are multiplied and thus the forbidden area is the  point-wise product of the three independent forbidden areas. 
This is illustrated in Figures~\ref{subfig:nyquist_gain_phase_delay} and \ref{subfig:interpolation_gain_phase_delay}.
\end{remark}

\begin{figure}[tb]
\begin{center}
\begin{subfigure}{.49\columnwidth}
 \centering
 \includegraphics[width=\textwidth]{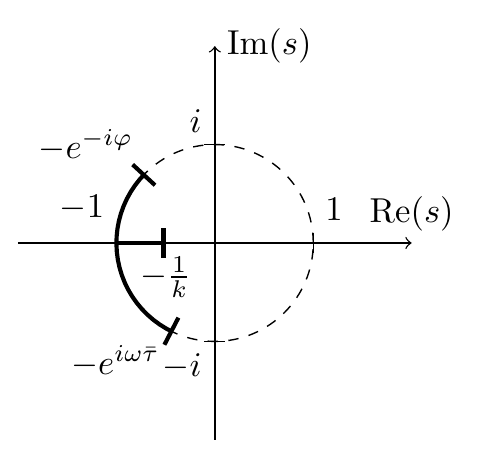}
 \subcaption{Forbidden areas in the Nyquist plot for independent margins.}
 \label{subfig:nyquist_gain_phase_delay_indep}
\end{subfigure}
\hfill
\begin{subfigure}{.49\columnwidth}
 \centering
 \includegraphics[width=\textwidth]{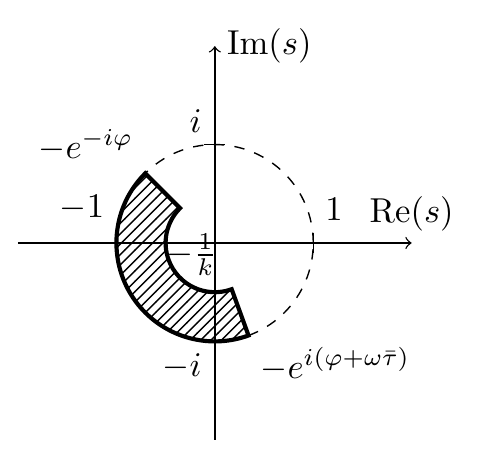}
 \subcaption{Forbidden areas in the Nyquist plot for simultaneous margins.}
 \label{subfig:nyquist_gain_phase_delay}
\end{subfigure}
\\
\begin{subfigure}[t]{.49\columnwidth}
 \centering
 \includegraphics[width=\textwidth]{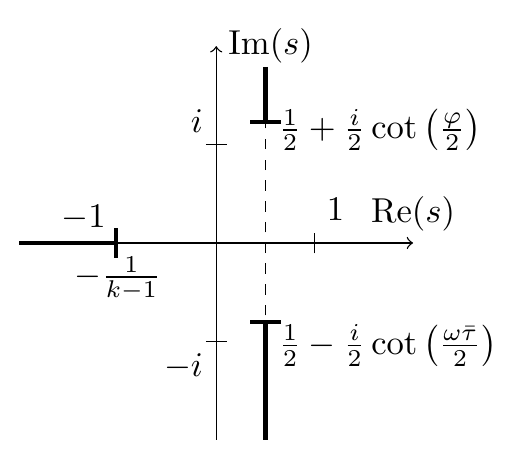}
 \subcaption{Forbidden areas in the range of the interpolant for independent margins.}
 \label{subfig:interpolation_gain_phase_delay_indep}
\end{subfigure}
\hfill
\begin{subfigure}[t]{.49\columnwidth}
\centering
  \includegraphics[width=\textwidth]{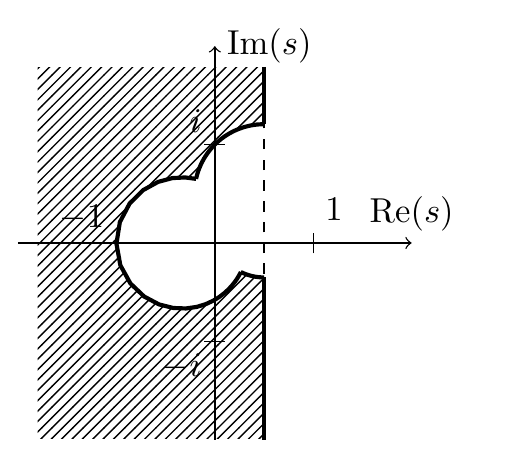}
  \subcaption{Forbidden areas in the range of the interpolant for simultaneous margins.}
 \label{subfig:interpolation_gain_phase_delay}
\end{subfigure}
\caption{Illustration of gain, phase, and delay margins. Left column shows forbidden areas for independent margins, in the Nyquist plot and for the range of the interpolant, and right column shows forbidden areas for simultaneous margins. The margins are illustrated for one frequency $\omega > 0$ such that $\varphi < \omega \bar{\tau} < 2\pi - \varphi$.}\label{fig:nyquist_interpolation_gain_phase_delay_margin}
\end{center}
\end{figure}

\rev{
\begin{remark}
Note that the approximation method for $\phi_{\bar \tau}(\omega)$ descried in Section~\ref{sec:quasi_convex} can be used in the same way on the problem with multiple stability margins to obtain a rational approximation of the weight function and hence a finite degree controller. Therefore, the homotopy method in Section~\ref{sec:homotopy} can also be extended to this setting.
\end{remark}
}%

\section{Numerical example}
\label{sec:example}

In this section we present some examples to investigate the performance of the methods proposed in Sections~\ref{sec:improved_method} and \ref{sec:homotopy}. To facilitate comparison with previous results we consider the various SISO-systems given in \cite[Ex.~1]{qi2017fundamental}. Moreover, in the last example of the section we also illustrate how to use the results from Section~\ref{sec:sim_margins} in order to design a controller with given constraints on minimum simultaneous gain and phase margins while optimizing the delay margin.

In all examples below, we use the techniques described in Sections~\ref{subsec:pole_in_zero} and \ref{subsec:rational_T_tilde}. In particular, we take $\varepsilon = 10^{-4}$ as in \eqref{eq:phi_lowerbound}, and we use a degree ten rational function to approximate the weight $\phi_{\bar{\tau}}(\omega)$. Moreover, when using the homotopy method in Algorithm~\ref{alg:T0_iterative}, we make three steps in the method (i.e., we set $N = 3$) and make a degree ten rational approximation of the obtained interpolant in each step, as described in Section~\ref{sec:rational_approx_interpolant}. Finally, we use the code associated with \cite{blomqvist2005optimization} to compute the maximum entropy solution to the interpolation problems.

\subsection{A first example}\label{sec:first_example}
We consider the system
\begin{equation}\label{eq:chen_ex_1_3}
P(s)=0.1\frac{(0.1s-1)(s+0.1659)}{(s-0.1081)(s^2+0.2981s+0.06281)},
\end{equation}
i.e., \cite[eq.~(42)]{qi2017fundamental}, which has one real unstable pole ($p=0.1081$) and one real nonminimum-phase zero ($z=10$). Similar systems have also been considered before in the literature on delay systems \cite[eq.~(22)]{middleton2007achievable}, \cite[Sec.~5]{michiels2002continuous}. In this example, since the unstable pole is closer to origin than the nonminimum phase zero, \cite[Rem.~17]{middleton2007achievable} implies that $2/p - 2/z = 2/0.1081 - 2/10 \approx 18.3$ is a tight bound on the maximum delay margin.

We use the method in Section~\ref{sec:improved_method} to compute a lower bound on the maximum delay margin and take $T_0(s) \equiv T_0$ real and constant. We vary $T_0$ in the interval $[-50, 1/2)$ and also compare with the method in \cite{qi2017fundamental}. The results are presented in Figure~\ref{fig:ex1}. As can be seen from the figure, for appropriate choices of $T_0$ our method outperforms that of \cite{qi2017fundamental}.

\begin{remark}
As $T_0$ tends to $-\infty$, the result obtained with our method  converges to the true maximum delay margin. This can be understood by analyzing the proof in \cite{middleton2007achievable} which shows that the bound is tight. In particular, it follows by using \eqref{eq:mobius} to transform the argument in \cite[Rem.~17]{middleton2007achievable} from the Nyquist domain to the interpolation domain and by considering the feasible regions for the corresponding interpolants.
\end{remark}

\begin{figure}[tb]
  \centering
  \includegraphics[width=0.475\textwidth]{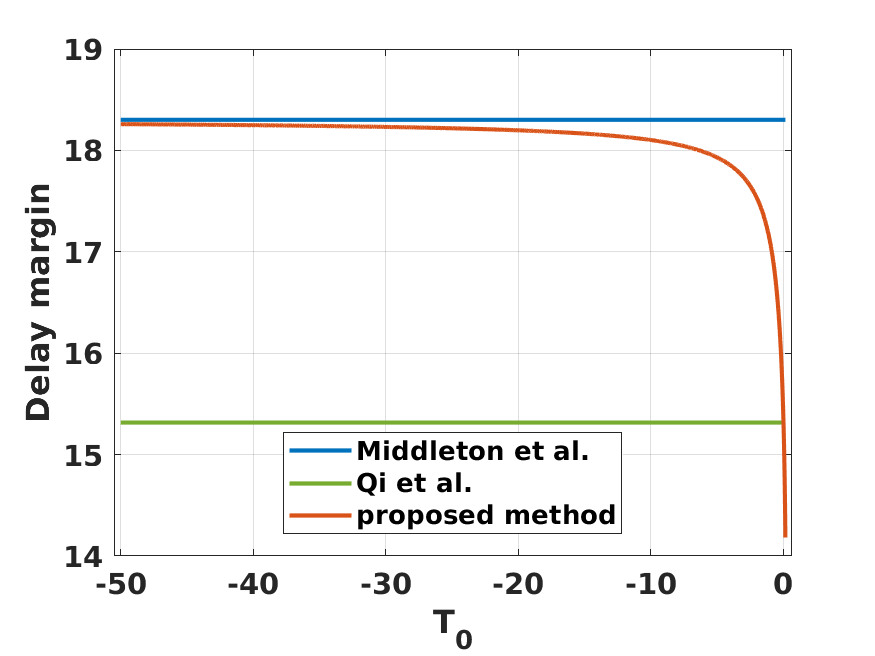}
  \caption{Results for the example in \eqref{eq:chen_ex_1_3}, with $T_0(s) \equiv T_0$ real and constant. When $T_0$ goes to $-\infty$ we get arbitrarily close to the tight bound from \cite{middleton2007achievable}, while for $T_0 > 0$ the bound deteriorate quickly.}
  \label{fig:ex1}
\end{figure}

\subsection{Systems with one unstable pole and one nonminimum phase zero}

The system \cite[eq.~(41)]{qi2017fundamental}, i.e.,
\begin{equation}\label{eq:chen_ex_1_2}
P(s)=\frac{s-z}{s-p},
\end{equation}
where $z,p > 0$, has a similar characteristic as the previous example since it has exactly one real unstable pole and one real nonminimum phase zero. Following \cite{qi2017fundamental}, we fix $z=2$ and compute an estimate for the delay margin for different values of $p$ in the interval $[0.3, 4]$. We use the method in \cite{qi2017fundamental}, the method in Section~\ref{sec:improved_method} with $T_0(s) \equiv T_0$ real and constant, and the homotopy method in Section~\ref{sec:homotopy}. The results are shown in Figure~\ref{fig:ex2}.

As can be seen in Figure~\ref{fig:ex2}, in the region $p < z = 2$, where the theoretical upper bound from \cite{middleton2007achievable} is tight, we get a considerable improvement over the bound in \cite{qi2017fundamental} by taking $T_0 = -10$, cf. Section~\ref{sec:first_example}. In this region, the homotopy method also perform better than \cite{qi2017fundamental}, however it does not achieve the upper bound. In the region $p > z = 2$ where, to the best of our knowledge, the true stability margin is still unknown, the choice of constant $T_0 = -10$ perform worse than \cite{qi2017fundamental}. On the other hand, in this region the value $T_0=0.35$ achieves some improvement. However, the best lower bound in this region is obtained by the homotopy method.

\begin{figure}[tb]
  \centering
  \includegraphics[width=0.475\textwidth]{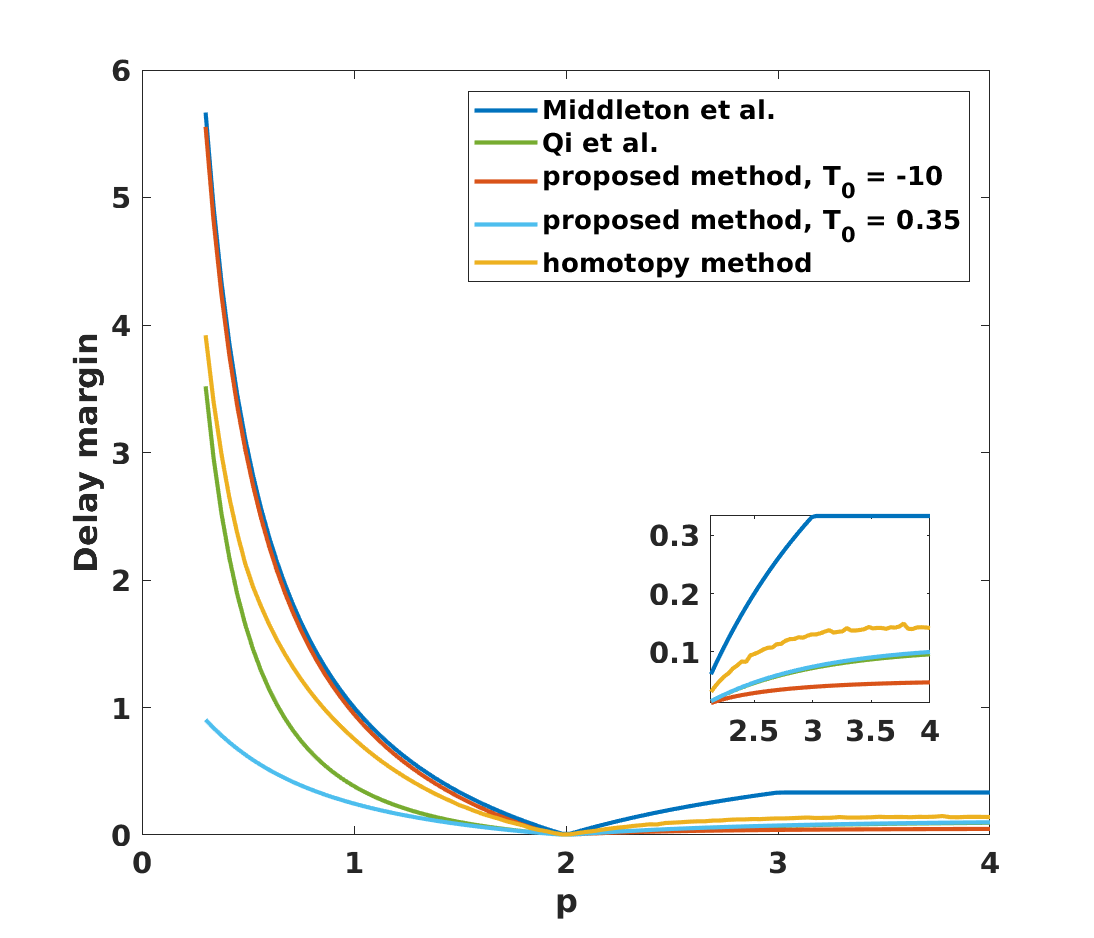}
  \caption{Results for the example in \eqref{eq:chen_ex_1_2}.}
  \label{fig:ex2}
\end{figure}

\subsection{System with two unstable real poles}

Next we consider the system \cite[eq.~(40)]{qi2017fundamental}, which has two distinct real unstable poles. The system is given by
\begin{equation}\label{eq:chen_ex_1_1}
P(s)=\frac{1}{(s-p_1)(s-p_2)},
\end{equation}
where $p_1, p_2 > 0$. We fix $p_1$ to $0.2$, and the delay margin is computed for different values of $p_2 \in [0.1, 3]$ using the method in Section~\ref{sec:improved_method} with $T_0(s) \equiv T_0$, real and constant, the method in  Section~\ref{sec:homotopy}, and the method in \cite{qi2017fundamental}. In the first method, for each value of $p_2$ we let the constant $T_0$ vary in the interval $[-10, 0.5)$ (with steps of size $0.1$) and take the best of these values as the bound.

Compared to the (not necessarily tight) upper bound from \cite{middleton2007achievable}, all three methods perform similarly and the gap is quite large. For a fixed $T_0$, the improvements are between $0.19\%$ and $2.8\%$ compared to the method in \cite{qi2017fundamental}, depending on $p_2$. Similarly, the homotopy method perform between $7.3\%$ better and $16\%$ worse than the method in \cite{qi2017fundamental}. The complete results are left out due to space limitations.

\begin{figure}[tb]
 \centering
 \includegraphics[width=0.475\textwidth]{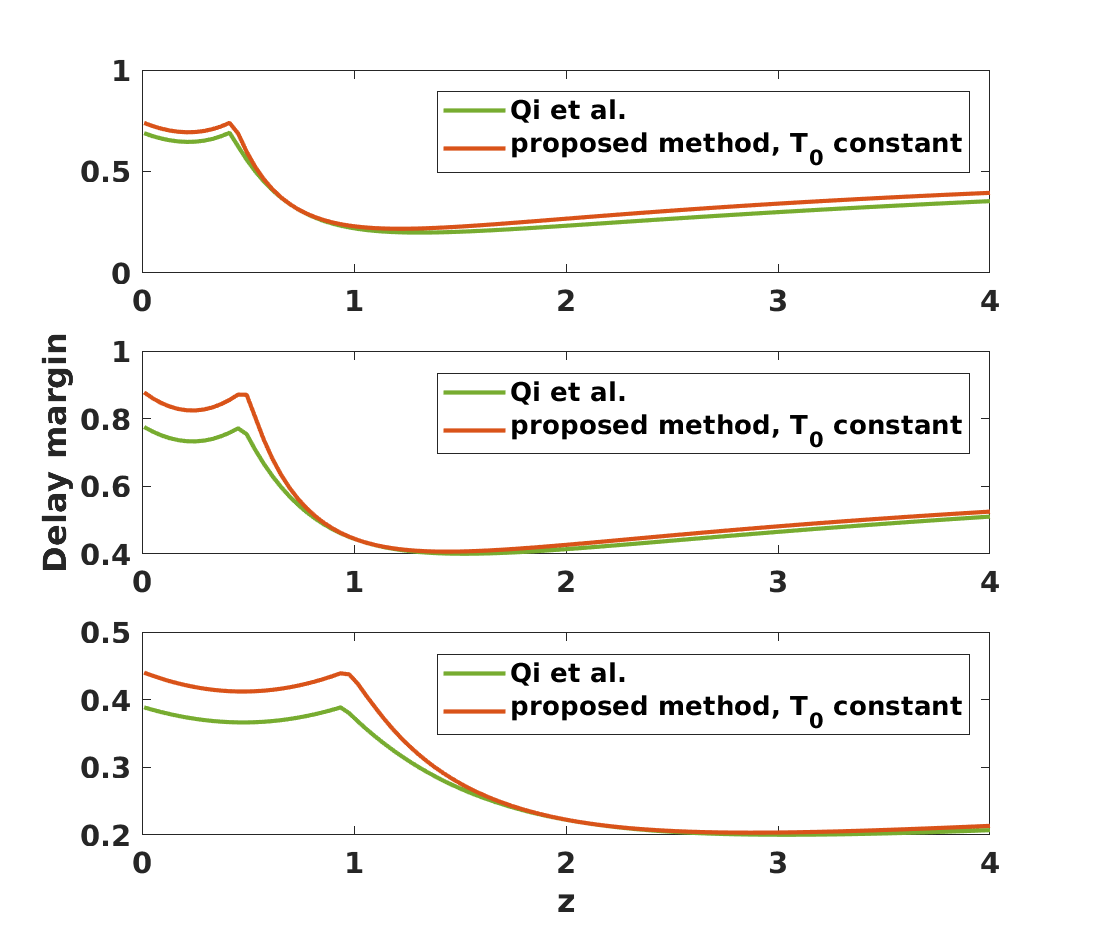}
 \caption{Reults for the example in \eqref{eq:chen_ex_1_4}. Estimates of the delay margin for,
 from top to bottom,  $(r, \theta) = (1, \pi/4)$, $(r, \theta) = (1, \pi/3)$, and  $(r, \theta) = (2, \pi/3)$.}
 \label{fig:ex4}
\end{figure}

\subsection{System with conjugate pair of complex poles}

We now consider the system \cite[eq.~(45)]{qi2017fundamental}, given by 
\begin{equation}\label{eq:chen_ex_1_4}
P(s)=\frac{s-z}{(s-re^{i\theta})(s-re^{-i\theta})},
\end{equation}
where $z, r > 0$ and $\theta \in [0, \pi/2]$. The system thus has a pair of unstable complex poles in $re^{\pm i \theta}$ and a nonminimum phase zero in $z$. We fix three different sets of complex poles, namely $(r, \theta) = (1, \pi/4)$, $(r, \theta) = (1, \pi/3)$, and $(r, \theta) = (2, \pi/3)$, and vary $z$ in $[0.01, 4]$. For each position of the zero $z$ we compute estimates of the delay margin. Similar to the previous example, for the method described in Section~\ref{sec:improved_method} we let $T_0(s) \equiv T_0$ be real and constant, but for each value of $z$ we vary $T_0$ in the interval $[-1.5, 0.5)$ (with steps of size $0.02$) and take the best of these values as the bound. The results are shown in Figure~\ref{fig:ex4}
As can be seen in the figure, the proposed method gives significantly improved bounds in some regions, for example when $\theta=\pi/3$ and $z$ is small compared to $r$.

\subsection{Control design for increased delay margin with simultaneous gain and phase margins}

We conclude this section with an example where we design a controller by maximizing the lower bound of the delay margin while at the same time ensuring simultaneous gain and phase margins. To this end, we revisit the plant \eqref{eq:chen_ex_1_3} in Section~\ref{sec:first_example}, with poles in $\{0.1081, -0.1490 \pm i 0.2015 \}$ and zeros in $\{10, -0.1659\}$. We specify a simultaneous gain and phase margin of $1.5$ and $\pi/12$, respectively, and use the method described in Section~\ref{sec:sim_margins} to maximize the simultaneous delay margin.

Since the plant is strictly proper, in order to obtain a controller that is proper we need $T$ to be strictly proper. We therefore take $T_0 \equiv 0$ and let the approximation $\outerfuncwzerotilde$ be improper with relative degree one, cf. Section~\ref{sec:strict_proper_T}. Using this setup, the algorithm returns a closed-loop system that is guaranteed to achieve a simultaneous gain, phase and delay margin of $1.5$, $\pi/12$, and $1.870$, respectively. However, it can be verified that the simultaneous margins of the returned system are in fact $1.5$, $\pi/12$, and $2.254$, respectively,%
\footnote{The independent margins achieved are an independent gain margin of $4.629$ at infinite frequency, an independent phase margin of $0.5248 \pi$ at phase crossover frequencies $\pm \; 0.4107$, and an independent delay margin of $4.015$.}
i.e., the algorithm returns a conservative estimate. This conservativeness is most likely due to the approximation errors in the approximation of $\outerfuncwzero$. The obtained complementary sensitivity function $T$ is given in \eqref{eq:T_controller_design_ex}, and a plot of $T(i\mR)$ is shown in Figure~\ref{subfig:ex_controller_design_T}. The critical frequencies $\omega_c$, where $T(i\omega_c)$ touches the forbidden region, are $\omega_c = \pm \; 0.7188$. The corresponding open-loop system, given by $P(s)K(s) = T(s)/(1 - T(s))$, is given in \eqref{eq:PK_controller_design_ex}, and the Nyquist plot is shown in Figure~\ref{subfig:_controller_design_Nyquist}. Finally, the obtained controller $K(s)$ is given in \eqref{eq:K_controller_design_ex}.

\begin{figure*}[tb]
\begin{center}
\begin{subfigure}[t]{.49\textwidth}
 \centering
 \includegraphics[width=\textwidth]{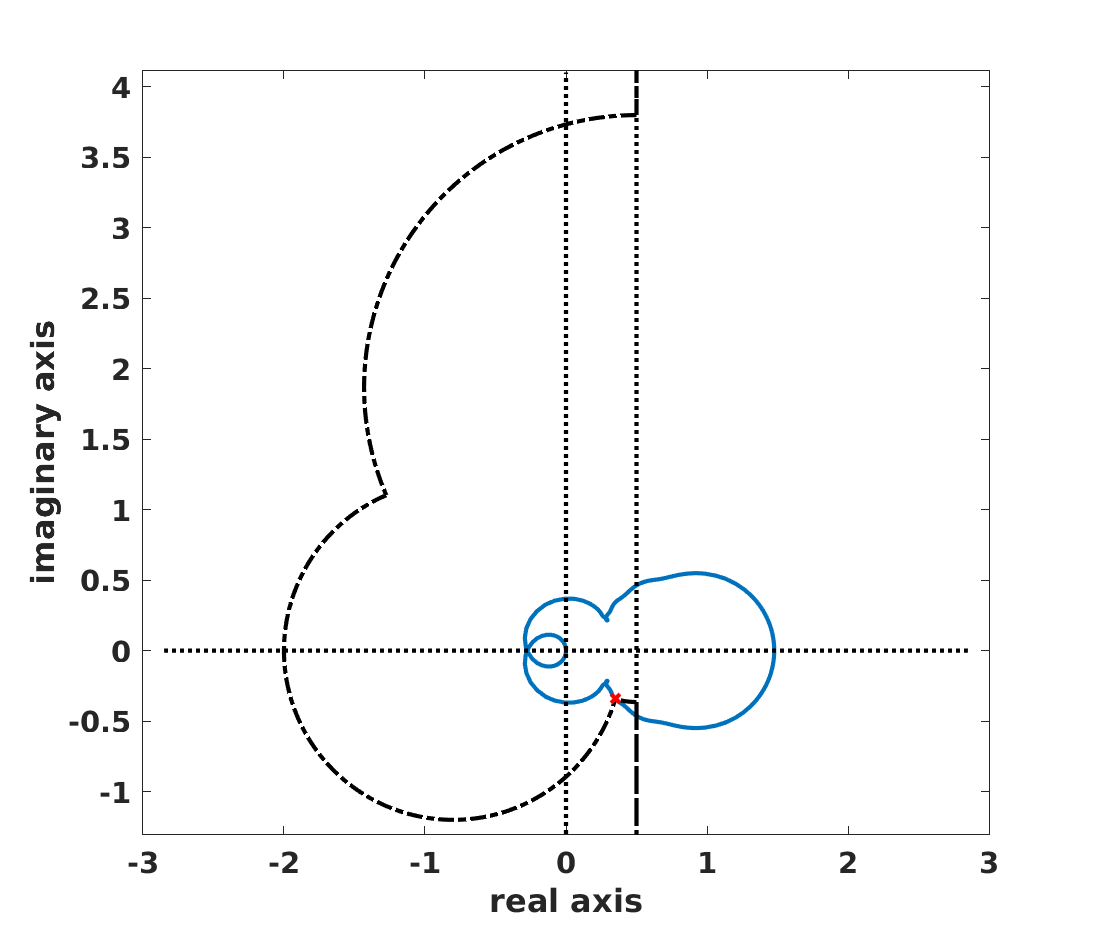}
 \subcaption{Frequency response plot of system \eqref{eq:T_controller_design_ex}, i.e., plot of $T(i\mR)$. The dash-dotted line segments show the boundary of the forbidden area at the critical frequency $\omega_c = 0.7188$, cf. Figure~\ref{subfig:interpolation_gain_phase_delay}, and the intersection between $T(i\mR)$ and the forbidden region is marked in red.}
 \label{subfig:ex_controller_design_T}
\end{subfigure}
\hfill
\begin{subfigure}[t]{.49\textwidth}
\centering
  \includegraphics[width=\textwidth]{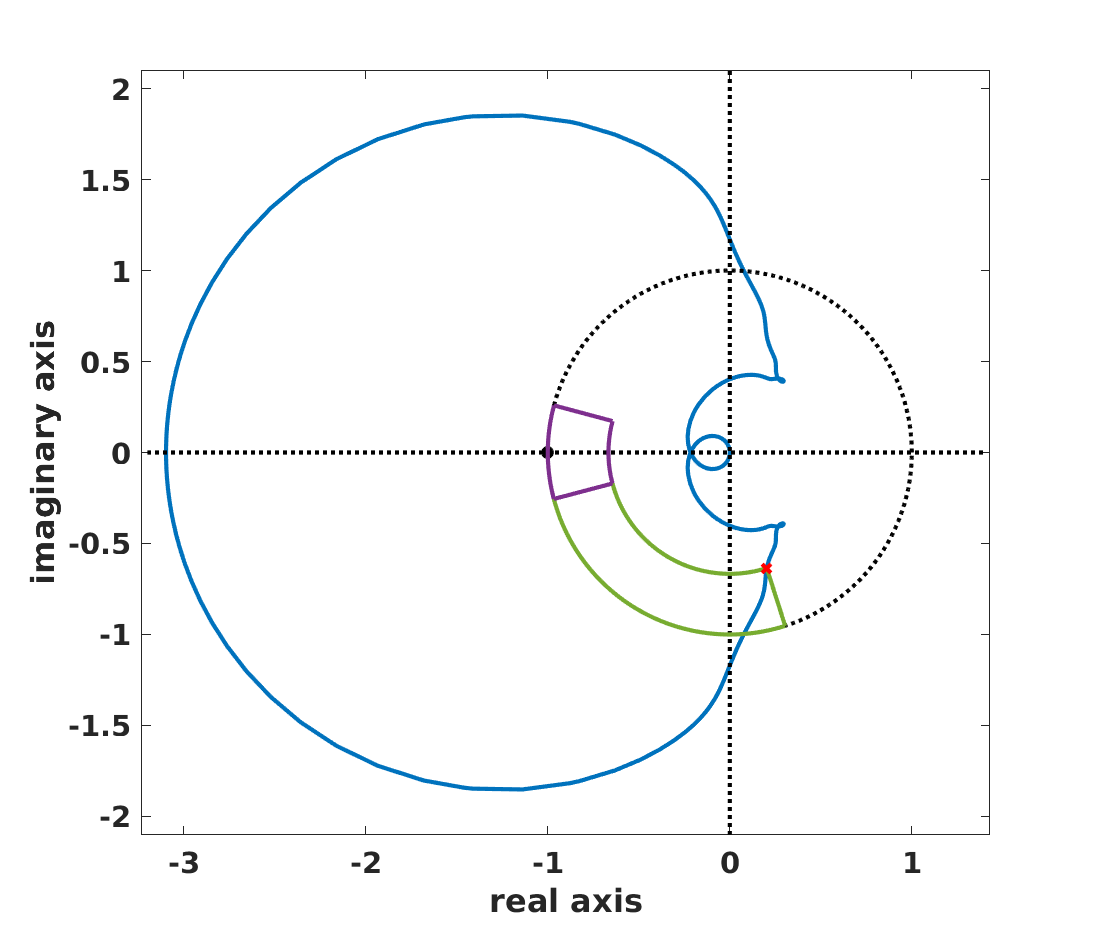}
 \subcaption{Nyquist plot of the system \eqref{eq:PK_controller_design_ex}. The region outline by the purple line segments corresponds to the simultaneous gain and phase margin of $1.5$ and $\pi/12$. The region outlined by the green line segments corresponds to adding the simultaneous delay margin $2.254$, illustrated for the critical frequency $\omega_c = 0.7188$, cf. Figure~\ref{subfig:nyquist_gain_phase_delay}. The point of intersection between the Nyquist plot and the forbidden region is marked in red.}
 \label{subfig:_controller_design_Nyquist}
 \end{subfigure}
\caption{Figures illustrating different aspects of system \eqref{eq:controller_design_ex}. Note that although it seems like both $T(i\mR)$ and the Nyquist plot enter the corresponding forbidden regions, and thus are both infeasible, this is not the case. Recall that the forbidden region is frequency dependent, and in each figure the forbidden region is only plotted for the frequency $\omega_c = 0.7188$.}
\label{fig:ex_controller_design}
\end{center}
\end{figure*}

\begin{figure*}[th]
\scriptsize
%
\hrulefill
\begin{subequations}\label{eq:controller_design_ex}
\begin{align}
& T(s) =
\frac{  -9.476 s^{10} + 61.57 s^9 + 202.5 s^8 + 1083 s^7 + 1797 s^6 + 2943 s^5 + 2623 s^4 + 1880 s^3 + 855.6 s^2 + 235.5 s + 41.1}
{s^{11} + 37.79 s^{10} + 371.6 s^9 + 1267 s^8 + 3783 s^7 + 5502 s^6 + 7481 s^5 + 5352 s^4 + 3509 s^3 + 1147 s^2 + 303.6 s + 27.84}
\label{eq:T_controller_design_ex} \\
& P(s)K(s) =
\frac{-9.476 s^{10} + 61.57 s^9 + 202.5 s^8 + 1083 s^7 + 1797 s^6 + 2943 s^5 + 2623 s^4 + 1880 s^3 + 855.6 s^2 + 235.5 s + 41.1}
{s^{11} + 47.27 s^{10} + 310 s^9 + 1065 s^8 + 2700 s^7 + 3705 s^6 + 4538 s^5 + 2729 s^4 + 1629 s^3 + 291.1 s^2 + 68.13 s - 13.27}
\label{eq:PK_controller_design_ex}\\
& K(s) =
\frac{-94.76 s^{12} - 321.7 s^{11} - 1287 s^{10} - 2082 s^9 - 3392 s^8 - 3143 s^7 - 2689 s^6 - 1606 s^5 - 817.4 s^4 - 337.6 s^3 - 93.89 s^2 - 19.58 s - 1.257}
{0.1s^{12} + 4.744 s^{11} + 31.79 s^{10} + 111.6 s^9 + 287.7 s^8 + 415.3 s^7 + 515.3 s^6 + 348.2 s^5 + 208.1 s^4 + 56.13 s^3 + 11.64 s^2 - 0.1967 s - 0.2201}
\label{eq:K_controller_design_ex}
\end{align}
\end{subequations}
\hrulefill
\vspace*{4pt}
\end{figure*}

\section{Conclusions}\label{sec:conclusions}
In this work we have designed methods for determining a sharper lower bound for the maximum delay margin. This has been done in the context of the gain and phase margins, which are well-understood problems. The reason why the delay margin problem is much more difficult has been explained in terms of the Nyquist plot, where, unlike the situation for the gain and phase margins, the forbidden area is frequency dependent. Therefore we have introduced a nominal complementary sensitivity function to obtain a frequency-dependent shift in this function. We have designed a homotopy-based heuristic for selecting the nominal complementary sensitivity function. The problem is then solved by analytic interpolation and rational model reduction. We have also considered the simultaneous optimal delay margin problem under specifications on the gain and phase margins. The problem to determine the exact maximum delay margin is still unsolved. However, there are upper and lower bounds. In a number of numerical examples we have been able to establish sharper lower bounds, which in a few cases are seen to be essentially optimal as they coincide with upper bounds established in \cite{middleton2007achievable}.

\section*{Acknowledgement}
The authors would like to thank Jie Chen for introducing us to the maximum delay margin problem and for helpful discussions, and Leonid Mirkin for helpful discussions that ultimately lead us to the interpretation presented in  Section~\ref{subsec:small_gain_interp}. We also acknowledge Anders Blomqvist and Ryozo Nagamune for providing the code associated with \cite{blomqvist2005optimization}, which we used for solving the interpolation problems.

\appendix

\subsection{Expressions related to simultaneous gain and phase margin}
\label{app:sim_gain_phase}

In this appendix we derive the expressions for the forbidden areas for the complementary sensitivity function $T(i\omega)$ given certain simultaneous gain and phase margin. Since the delay margin is a frequency-dependent phase margin, this can easily be incorporated in expressions below; cf. Figure~\ref{fig:nyquist_interpolation_gain_phase_delay_margin}.

To this end, in order to guarantee certain simultaneous gain and phase margin, consider Figures~\ref{subfig:nyquist_gain_phase_margin} and \ref{subfig:interpolation_gain_phase}, illustrating the forbidden areas for the Nyquist curve $K(i\omega)P(i\omega)$ and the complementary sensitivity function $T(i\omega)$, respectively. Expressions for the arcs and lines defining the forbidden area for $T$ are obtained by applying the M\"obius transformation \eqref{eq:mobius} to the arcs and lines defining the forbidden area for the Nyquist curve. Using \eqref{eq:mobius}, the arc coming from the smaller circle, defined by $(-1/k) \, e^{i\theta}$ for $\theta \in [-\varphi, \varphi]$, is transformed into the arc $1 / (1 - ke^{-i\theta}) = (1 - k\cos(\theta) - ik\sin(\theta))/(1 - 2k\cos(\theta) + k^2)$, $\theta \in [-\varphi, \varphi]$. The latter can be shown to be an arc belonging to a circle centered at $-1/(k^2 - 1)$ and with radius $k/(k^2 - 1)$. Moreover,  the two straight lines connecting the inner and outer arcs in Figure~\ref{subfig:nyquist_gain_phase_margin}, i.e., the lines $(-1/\rho) \, e^{\pm i\varphi}$ for $\rho \in [1, k]$, are similarly transformed to $1 / (1 - \rho e^{\mp i\varphi}) = (1 - \rho\cos(\varphi) \mp i\rho\sin(\varphi))/(1 - 2\rho\cos(\varphi) + \rho^2)$, $\rho \in [1, k]$. The latter corresponds to arcs belonging to circles centered at $1/2 \mp (i/2) \cot(\varphi)$ and having radius $1/|2\sin(\varphi)|$.

\subsection{Proof of Theorem~\ref{thm:dist_to_cut}}
\label{app:dist_to_cut}

The assumptions in the theorem are that $T \in \HinfC$, $T(\infty)$ is well-defined, and $\real(T(\infty)) < 1/2$. Therefore, for all results in this section it is assumed that these conditions hold.
\rev{The first lemma needed for the proof follows almost verbatim from the proof of the Nyquist stability criterion, see, e.g., \cite[Sec.~9.2]{astrom2008feedback}.}

\begin{lemma}\label{lem:encirclement}
For a fixed $\tau > 0$, $1 + T(s)(e^{-\tau s} - 1) \neq 0$ for all $s \in \mC_+$ if and only if the curve $\{ T(i \omega)(e^{-\tau i \omega} - 1) \mid \omega \in \mR \}$ does not encircle $-1$.
\end{lemma}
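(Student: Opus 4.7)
The plan is to apply the argument principle in the form of the Nyquist stability criterion to the function $F(s) := 1 + T(s)(e^{-\tau s}-1) = 1 + G(s)$, where $G(s) := T(s)(e^{-\tau s}-1)$. Since $T \in \HinfC$ and $e^{-\tau s}$ is entire and bounded by $1$ on $\bar{\mC}_+$, $F$ is analytic and bounded on $\bar{\mC}_+$; in particular $F$ has no poles in $\mC_+$, so the number of zeros of $F$ in $\mC_+$ equals the winding number of $F$ around $0$, or equivalently of $G$ around $-1$, along a suitable closed contour.

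First I would take the standard Nyquist contour $\Gamma_R$, consisting of the segment $[-iR,iR]$ of the imaginary axis followed by the semicircular arc $\{Re^{i\theta} : \theta \in [-\pi/2,\pi/2]\}$, oriented so as to enclose the right half-disc. If $F$ has zeros on $i\mR$, one inserts small semicircular indentations to the right around those points; this does not affect either the zero-count in $\mC_+$ or the encirclement condition in the limit. For all sufficiently large $R$, the argument principle yields that the number of zeros of $F$ inside $\Gamma_R$ equals the winding number of $G\circ \Gamma_R$ around $-1$.

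The main step is to control the outer arc as $R\to\infty$. Since $|e^{-\tau s}|\le 1$ on $\bar{\mC}_+$, $G$ is uniformly bounded by $2\|T\|_{\Hinf}$. For $\theta$ bounded away from $\pm\pi/2$, $e^{-\tau Re^{i\theta}}\to 0$ and so $G(Re^{i\theta})\to -T(\infty)$; near $\theta=\pm\pi/2$ the exponential does not decay, but the image of the outer arc stays within an arbitrarily small neighbourhood of the circle
\[
\{T(\infty)(e^{i\psi}-1) : \psi\in\mR\},
\]
which has radius $|T(\infty)|$ and centre $-T(\infty)$. A direct calculation shows that $|-1+T(\infty)|^2 - |T(\infty)|^2 = 1 - 2\real(T(\infty))$, which is strictly positive by the standing hypothesis $\real(T(\infty))<1/2$. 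Hence $-1$ lies strictly outside this limiting circle, and for $R$ large the outer-arc image lies in a bounded region whose complement contains $-1$; therefore the outer arc contributes $0$ to the winding number around $-1$ in the limit.

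Passing $R\to\infty$, I would conclude that the number of zeros of $F$ in $\mC_+$ equals the winding number of $\omega\mapsto T(i\omega)(e^{-i\tau\omega}-1)$ around $-1$ (the curve is closed at infinity by the point $-T(\infty)$, which does not enclose $-1$). Both directions of the lemma then follow: $F$ is nonvanishing on $\mC_+$ iff this winding number is zero, which is exactly the statement that the curve does not encircle $-1$. The principal obstacle is the delicate behaviour of $e^{-\tau s}$ near the imaginary axis on the outer arc, where the exponential fails to decay; the assumption $\real(T(\infty))<1/2$ is what supplies the quantitative gap separating $-1$ from the asymptotic image of $G$ and makes the Nyquist argument go through.
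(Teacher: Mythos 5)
Your proof is correct and takes essentially the same route as the paper: the paper disposes of this lemma by noting it follows almost verbatim from the proof of the Nyquist stability criterion, which is precisely the contour/argument-principle argument you carry out. Your explicit handling of the outer arc, using $\real(T(\infty)) < 1/2$ to keep $-1$ a fixed distance outside the limiting disc centred at $-T(\infty)$ of radius $|T(\infty)|$, simply fills in the detail the paper delegates to the cited reference.
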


\begin{lemma}\label{lem:not_zero_s_omega}
Let $\bar{\tau} > 0$. The following two statements are equivalent:
\begin{enumerate}
\item[i)] $1 + T(s)(e^{-\tau s} - 1) \neq 0$ for all $s \in \bar{\mC}_+$ and $\tau \in [0, \bar{\tau}]$,
\item[ii)] $1 + T(i\omega)(e^{-\tau i\omega} - 1) \neq 0$ for all $\omega \in \mR$ and $\tau \in [0, \bar{\tau}]$.
\end{enumerate}
\end{lemma}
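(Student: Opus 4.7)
The implication (i) $\Rightarrow$ (ii) is immediate, since $i\mR \subset \bar{\mC}_+$. The heart of the argument is therefore the converse, and my plan is to attack it with the Nyquist criterion of Lemma~\ref{lem:encirclement} combined with a homotopy in the delay parameter.

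Fix $\tau \in [0, \bar{\tau}]$, and consider the family of image curves $\gamma_t : \omega \mapsto T(i\omega)(e^{-i t \omega} - 1)$ for $t \in [0, \tau]$. Because $T \in \HinfC$ with a well-defined limit at infinity, this family depends continuously on $t$ in a strong enough sense that the winding number around any fixed point not on any $\gamma_t$ is a continuous, integer-valued, and hence constant function of $t$. At $t = 0$ the curve collapses to $\{0\}$, which trivially does not encircle $-1$. Hypothesis (ii) states that no $\gamma_t$ passes through $-1$ for $t \in [0, \bar{\tau}]$. Constancy of the winding number then gives that $\gamma_\tau$ does not encircle $-1$, so Lemma~\ref{lem:encirclement} yields $1 + T(s)(e^{-\tau s} - 1) \neq 0$ for all $s \in \mC_+$. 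Together with (ii) itself (the case $s \in i\mR$), this establishes the conclusion on $\mC_+ \cup i\mR$.

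The behaviour at $s = \infty$ must be treated separately, because $e^{-\tau s}$ has no limit on the Riemann sphere. The key observation is that $1 + z(e^{i\phi}-1) = 0$ is equivalent to $z = 1/(1-e^{i\phi})$, and a direct computation shows that as $\phi$ ranges over $\mR$, the values $1/(1-e^{i\phi})$ trace out exactly the vertical line $\{1/2 + ir : r \in \mR\}$. Since $T(i\omega) \to T(\infty)$ as $|\omega| \to \infty$ and $\real(T(\infty)) < 1/2$ by hypothesis, the values $1 + T(i\omega)(e^{-i\tau\omega}-1)$ remain bounded away from zero as $|\omega| \to \infty$; and along any path into $\mC_+$ approaching infinity, $e^{-\tau s} \to 0$, so the expression tends to $1 - T(\infty) \neq 0$ (again using $\real(T(\infty)) < 1/2$). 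This handles the point at infinity.

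The subtlest step I expect is the homotopy-continuity of the winding number: one must interpret each $\gamma_t$ as a closed curve (e.g., on the one-point compactification $\mR \cup \{\infty\}$), justify the uniform-in-$\omega$ continuity of $t \mapsto \gamma_t$ using boundedness of $T$ together with its limit $T(\infty)$, and ensure that $-1$ stays uniformly separated from the family $\{\gamma_t\}$, even as $|\omega| \to \infty$. The hypothesis $\real(T(\infty)) < 1/2$ is precisely what supplies this uniform separation and what makes the Nyquist-style argument rigorous.
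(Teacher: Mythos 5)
Your overall strategy coincides with the paper's: the nontrivial direction is obtained by combining Lemma~\ref{lem:encirclement} with a homotopy in the delay parameter. You argue directly (the winding number of $\gamma_t$ about $-1$ is constant in $t$ and equals $0$ at $t=0$), while the paper argues the contrapositive (a zero in $\mC_+$ for some $\tilde{\tau}$ forces an encirclement, and continuity of the curve in $\tau$, starting from the identically zero curve at $\tau=0$, forces a passage through $-1$ at some intermediate $\tau$ and some $\omega$); these are the same argument, and your remarks on uniform separation from $-1$ being supplied by $\real(T(\infty))<1/2$ are at least as careful as the paper's.

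There is, however, a concrete flaw in your treatment of the point at infinity, which does need to be addressed since the paper includes $\infty$ in $\bar{\mC}_+$ and interprets the condition there via sequences $|s_n|\to\infty$. Your claim that ``along any path into $\mC_+$ approaching infinity, $e^{-\tau s}\to 0$'' is false: take $s_n = 1 + i n$, which lies in $\mC_+$ with $|s_n|\to\infty$, yet $e^{-\tau s_n} = e^{-\tau}e^{-i\tau n}$ only oscillates, so the expression does not tend to $1-T(\infty)$. Such sequences (bounded real part, unbounded imaginary part, strictly inside $\mC_+$) are covered neither by your on-axis argument nor by your decay argument, which requires $\real(s_n)\to\infty$. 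The repair is exactly the computation the paper uses, and which you already invoke on the axis: since $T(\infty)$ is well defined and $\real(T(\infty))<1/2$, there exist $\epsilon, M>0$ such that $\real(T(s))<1/2-\epsilon$ for all $s\in\bar{\mC}_+$ with $|s|>M$; on the other hand, a zero of $1+T(s)(e^{-\tau s}-1)$ requires $T(s) = 1/(1-e^{-\tau s})$, whose real part is at least $1/2$ whenever $|e^{-\tau s}|\le 1$, i.e., for every $s\in\bar{\mC}_+$ and every $\tau\ge 0$. Hence the expression is nonvanishing (indeed bounded away from zero) on $\{s\in\bar{\mC}_+ : |s|>M\}$ uniformly in $\tau\in[0,\bar{\tau}]$, which disposes of all unbounded sequences at once and closes the gap in your proposal.
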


\begin{proof}
We proceed by proving the equivalence between the negation of the two statements, i.e., the equivalence between the two statements
\begin{enumerate}
\item[i\ensuremath{'})] there exists a $\tilde{\tau} \in [0, \bar{\tau}]$ and an $s \in \bar{\mC}_+$ such that $1 + T(s)(e^{-\tilde{\tau} s} - 1) = 0$,
\item[ii\ensuremath{'})] there exists a $\tilde{\tau} \in [0, \bar{\tau}]$ and an $\omega \in \mR$ such that $1 + T(i\omega)(e^{-\tilde{\tau} i\omega} - 1) = 0$.
\end{enumerate}
That the second statement implies the first is trivial, since $i\mR \subset \bar{\mC}_+$, and we can thus take the same point and the same $\tilde{\tau}$. Moreover, if in the first statement the point $s$ is such that $s \in i\mR$, then in same way the first statement implies the second. The lemma thus follows by showing that, if $i\ensuremath{'})$ holds for $s \in \mC_+$, then it implies $ii\ensuremath{'})$, and that neither $i\ensuremath{'})$ nor $ii\ensuremath{'})$ can hold for $s = \infty$, i.e., that there can be no sequence $(s_n)_n \subset \bar{\mC}_+$ such that $|s_n| \to \infty$ as $n \to \infty$ and such that
\[
\lim_{n \to \infty} 1 + T(s_n)(e^{-\tilde{\tau} s_n} - 1) = 0.
\]

To show the latter, note that since $\real(T(\infty)) < 1/2$ there exists an $\epsilon > 0$ and an $M > 0$ such that $\real(T(s)) < 1/2 - \epsilon$ for all $s \in \bar{\mC}_+$ such that $|s| > M$. Therefore, for $|s| > M$ we have that
\begin{align*}
&|1 + T(s)(e^{-\tau s} - 1)| =\\
&= \sqrt{\real((1 + T(s)(e^{-\tau s} - 1))^2 + \imag((1 + T(s)(e^{-\tau s} - 1))^2} \\
&\geq 1 - 2(1/2 - \epsilon)= 2\epsilon > 0.
\end{align*}
Therefore, such a sequence cannot exist.

In the case $i\ensuremath{'})$ holds for some $s \in \mC_+$, by Lemma~\ref{lem:encirclement} we have that for the same value $\tilde{\tau}$, the curve $\{ T(i \omega)(e^{-\tilde{\tau} i \omega} - 1) \mid \omega \in \mR \}$ encircles $-1$ at least once. However, all points on the curve are continuous in $\tau$, and for $\tau = 0$ the curve is identically $0$. Therefore, there must be a $\tau \in (0, \tilde{\tau})$ and an $\omega \in \mR$ such that $T(i \omega)(e^{-\tau i \omega} - 1) = -1$.
\end{proof}

To prove the theorem, we first prove that $1)$ implies $2)$. To this end, assume that there is an $\varepsilon > 0$ such that 
\[
|1 - T(s)(1 - e^{-s\tau})| \geq \varepsilon
\]
for all $s \in \bar{\mC}_+$ and all $\tau \in [0, \bar{\tau}]$. Observing that $1 - e^{-s \tau} \neq 0$ for all $s \in \mC_+$ and $\tau \in (0, \bar{\tau})$, this means that for all $s \in \mC_+$ and all $\tau \in (0, \bar{\tau})$
\[
\left| \frac{1}{1 - e^{-s\tau}} - T(s) \right| \geq \varepsilon \left| \frac{1}{1 - e^{-s\tau}} \right|.
\]
Therefore,
\begin{align*}
& \inf_{\omega \in \mR} \dist(\cut{\omega\bar{\tau}}, T(i\omega)) \! = \! \inf_{\omega \in \mR} \inf_{\tau \in (0, \bar{\tau}) } \left| \frac{1}{1 - e^{-i\omega \tau}} - T(i\omega) \right| \\
& \phantom{xxx} \geq \inf_{s \in \mC_+} \inf_{\tau \in (0, \bar{\tau}) } \left| \frac{1}{1 - e^{-s \tau}} - T(s) \right| \\
& \phantom{xxx} \geq \inf_{s \in \mC_+} \inf_{\tau \in (0, \bar{\tau}) } \varepsilon \left| \frac{1}{1 - e^{-s\tau}} \right| = \inf_{\tau \in (0, \bar{\tau}) } \inf_{s \in \mC_+} \varepsilon \left| \frac{1}{1 - e^{-s\tau}} \right|
\end{align*}
Next, noting i) that $1/(1 - e^{-s\tau}) = -e^{s\tau} /(1 - e^{s\tau})$, i.e., an application of the M\"obius transformation \eqref{eq:mobius} to $-e^{s\tau}$, and ii) that $\{ -e^{s\tau} \mid s \in \mC_+ \} = \bar{\mD}^C$, i.e, the outside of the closed unit disc, we have that $\{ 1/(1 - e^{-s\tau}) \mid s \in \mC_+ \} = \{ z \in \mC \mid \real(z) > 1/2 \}$. In particular, this means that irrespectively of $\tau$, $\inf_{s \in \mC_+} \varepsilon | 1/(1 - e^{-s\tau}) | \geq \varepsilon /2$. Therefore, if $|1 - T_0(s)(1 - e^{-s\tau})| \geq \varepsilon$ for all $s \in \bar{\mC}_+$ and all $\tau \in [0, \bar{\tau}]$, then
\[
\inf_{\omega \in \mR} \dist(\cut{\omega\bar{\tau}}, T(i\omega)) \geq \frac{\varepsilon}{2}.
\]

To prove the converse, assume there is an $\varepsilon > 0$ such that
\[
\inf_{\omega \in \mR} \dist(\cut{\omega\bar{\tau}}, T(i\omega)) \geq \varepsilon.
\]
In particular, this means that $| 1/(1 - e^{-i\omega \tau}) - T(i\omega) | \neq 0$ for all $\omega \in \mR$ and $\tau \in [0, \bar{\tau}]$, and thus that
\begin{equation}\label{eq:proof_thm_dist_cut}
| 1 - T(i\omega)(1 - e^{-i\omega \tau}) | \neq 0.
\end{equation}
By Lemma~\ref{lem:not_zero_s_omega}, this means that $1 + T(s)(e^{-\tau s} - 1) \neq 0$ for all $s \in \bar{\mC}_+$ and $\tau \in [0, \bar{\tau}]$. Left to show is that this also implies that
\[
\inf_{\substack{s \in \bar{\mC}_+ \\ \tau \in [0, \bar{\tau}]}} |1 + T(s)(e^{-\tau s} - 1)| > 0.
\]
To this end, assume that the latter is false. Then there is a sequence $(s_n, \tau_n)_n \subset \bar{\mC}_+ \times [0, \bar{\tau}]$ such that
\[
\lim_{n \to \infty} 1 + T(s_n)(e^{-\tau_n s_n} - 1) = 0.
\]
First note that since $[0, \bar{\tau}]$ is a bounded interval, there is a converging subsequence $\tau_n \to \tau_\infty \in [0, \bar{\tau}]$. Without loss of generality, we therefore consider only converging sequences $(\tau_n)_n$. Next, if the sequence $(s_n)_n$ is bounded, there is a subsequence converging to a point $s_\infty$. However, by continuity this means that $1 + T(s_\infty)(e^{-\tau_\infty s_\infty} - 1) = 0$, which contradicts \eqref{eq:proof_thm_dist_cut} and Lemma~\ref{lem:not_zero_s_omega}. Thus any such sequence $(s_n)_n$ must be unbounded. However, reexamining the proof of Lemma~\ref{lem:not_zero_s_omega} we see that due to the assumption $\real(T(\infty)) < 1/2$, there is an $\epsilon > 0$ and an $M > 0$ such that if $|s| \geq M$ then $|1 + T(s)(e^{-\tau s} - 1)| \geq 2\epsilon$, irrespectively of $\tau$. Therefore, such a sequence cannot exists, and the statement follows.
\qed

\subsection{Expression for $\absfuncwzero(\omega)$}
\label{app:phi_inv}

Since $1/ \sup_x f(x) = \inf_x 1/f(x)$ for nonnegative functions $f$, we have that
\begin{align*}
\absfuncwzero(\omega)^{-1} & = \frac{1}{\sup_{\tau \in [0, \bar{\tau}
]} \left| \frac{e^{-\tau i\omega} - 1}{1 - T_0(i\omega)(1 - e^{-\tau i\omega})} \right|} \\
& = \inf_{\tau \in [0, \bar{\tau}]} \left| \frac{1}{1 - e^{-\tau i \omega}} - T_0(i\omega) \right| \\
& = \dist(\cut{\omega\bar{\tau}}, T_0(i\omega)).
\end{align*}
This can be computed more explicitly, which will be done for frequencies $\omega \geq 0$. The expressions for $\omega < 0$ follow by a similar argument.

To this end, note that for $\omega > 0$ the forbidden region $\cut{\omega \bar{\tau}}$ is a cut entering from below, as illustrated in Figures~\ref{subfig:interpolation_delay_margin_small_omega} and \ref{subfig:interpolation_delay_margin_large_omega}. The expression for $\dist(\cut{\omega\bar{\tau}}, T_0(i\omega))$ thus depends on if $T_0(i\omega)$ has an imaginary part smaller than \rev{the} end-point of the cut, the imaginary part of which is given by $-0.5 \cot(\omega\bar{\tau}/2)$, or not. In the former case, the point on the cut closest to $T_0(i\omega)$ have the same imaginary part, and the distance is thus $|0.5 - \real(T_0(i\omega))|$. In the latter case, the point on the cut closest to $T_0(i\omega)$ is the the end-point. In this case the distance is $|0.5 - i0.5\cot({\omega \bar{\tau}/2}) - T_0(i\omega)|$.

Using the notation
\[
\eta_0^{\bar{\tau}}(\omega) := \imag(T_0(i\omega))  + \frac{1}{2}\cot(\omega \bar{\tau}/2),
\]
this can be summarized as
\begin{equation}\label{eq:absfuncwzero}
\absfuncwzero(\omega)^{-1} \! = \!
\begin{cases}
 |0.5 - \real(T_0(i\omega))|, \;\, \text{for } \omega \! \geq \! 2\pi/\bar{\tau}, \text{ or} \!\! \\
 \phantom{xxx} \text{for } 0 \! < \! \omega \! < \! 2\pi/\bar{\tau}  \text{ and } \eta_0^{\bar{\tau}}(\omega) \! \leq \! 0, \\
 \left|0.5 - i0.5\cot({\omega \bar{\tau}/2}) - T_0(i\omega) \right|,\\
 \phantom{xxx} \text{for } 0 \! < \! \omega \! < \! 2\pi/\bar{\tau} \text{ and } \eta_0^{\bar{\tau}}(\omega) \! > \! 0,\\
 \infty, \hspace{3pt} \text{for } \omega = 0, \\
 \left|0.5 + i0.5\cot({\omega \bar{\tau}/2}) - T_0(i\omega) \right|,\\
 \phantom{xxx} \text{for} \ -2\pi/\bar{\tau} \! < \! \omega \! < \! 0 \text{ and } \eta_0^{\bar{\tau}}(\omega) \! < \! 0 \!\! \\
 |0.5 - \real(T_0(i\omega))|, \;\, \text{for } \omega \! \leq \! -2\pi/\bar{\tau}, \, \text{or} \!\! \\
 \phantom{xxx} \text{for} \ -2\pi/\bar{\tau} \! < \! \omega \! < \! 0 \text{ and } \eta_0^{\bar{\tau}}(\omega) \! \geq \! 0. \!\!
\end{cases}
\end{equation}
Finally, in case $T_0(s) \equiv T_0$, constant, one can compute for which frequencies the transitions happen, see \cite[App.~B]{ringh2018lower}.

\subsection{Proof of Lemma~\ref{lem:T_0_infty}}
\label{app:proof_lem_T_0_infty}

Note that $(1 - T_0(s)(1 - e^{-s\tau}))^{-1} \in \HinfC$ for all $\tau \in [0, \bar{\tau}]$ if and only if there is an $\varepsilon > 0$ such that $|1 - T_0(s)(1 - e^{-s\tau})| \geq  \varepsilon$ for all $s \in \bar{\mC}_+$ and all $\tau \in [0, \bar{\tau}]$. Now, by Theorem~\ref{thm:dist_to_cut} this is true if and only if there is an $\varepsilon > 0$ such that $\dist(\cut{\omega \bar{\tau}}, T_0(i\omega)) \geq \varepsilon$ for all $\omega \in \mR$. To see that there is a $\bar{\tau} > 0$ such that the latter holds, note that if $\real(T_0(\infty)) < 1/2$, then there exists an $\varepsilon > 0$ and $M > 0$ such that for all $|\omega| \geq M$, $\real(T_0(i\omega)) \leq 1/2 - \varepsilon$ and thus $\inf_{\omega \in [-M, M]^C} \dist(\cut{\omega \bar{\tau}}, T_0(i\omega)) \geq \varepsilon$, irrespectively of $\bar{\tau}$. Now, consider only $\bar{\tau}$ such that $\bar{\tau} = \tilde{\tau} \pi/M$ for some $0 <\tilde{\tau} \leq 1$, and observe that $\cut{\omega \tilde{\tau} \pi/M}$ is contained in $[-\infty, 0]$ for $\omega \in[0, M]$ and in $[0, \infty]$ for $\omega \in [-M, 0]$ for all $0 < \tilde{\tau} \leq 1$. Moreover, set $\eta := \max_{\omega \in [-M, M]} | \imag(T_0(i\omega)) |$. If $|-0.5\cot(\omega\bar{\tau}/2)| \geq \eta + \varepsilon$ for all $\omega \in [-M, M]$, then by construction the result follows, since then $\cut{\omega\tilde{\tau}\pi/M}$ always have an imaginary part which in magnitude is at least $\varepsilon$ larger than that of $T_0(i\omega)$ on the interval $\omega \in [-M,M]$. Since the absolute value of $\cot$ is an even function, the problem is symmetric and thus we only consider $\omega \in [0, M]$. On this interval $|-0.5\cot(\omega\tilde{\tau}\pi/(2M))| = 0.5\cot(\omega\tilde{\tau}\pi/(2M))$, which is a nonincreasing function that attains its minimum for $\omega = M$. Thus, for any $\tilde{\tau} > 0$ such that $0.5\cot(\tilde{\tau}\pi/2) \geq \eta + \varepsilon$, i.e., for any $\tilde{\tau} > 0$ such that $\tilde{\tau} \leq (2/\pi) \cot^{-1}(2\eta + 2\varepsilon)$, and therefore for any $\bar{\tau} > 0$ such
\[
\bar{\tau} \leq \frac{2}{M} \cot^{-1}(2\eta + 2\varepsilon),
\]
$\dist(\cut{\omega \bar{\tau}}, T_0(i\omega)) \geq \varepsilon$ for all $\omega \in \mR$. This proves the first part of the lemma.

The second part of the lemma is proved by contradiction. To this end, assume that $\tau > 0$ and that there is an $\varepsilon > 0$ such that $|1 - T_0(s)(1 - e^{-s\tau})| \geq \varepsilon$ for all $s \in \mC_+$. Observing that $1 - e^{-s \tau} > 0$ for all $s \in \mC_+$, this means that for all $s \in \mC_+$
\begin{equation}\label{eq:lemma_real_part_1/2_proof}
\left| \frac{1}{1 - e^{-s\tau}} - T_0(s) \right| \geq \varepsilon \left| \frac{1}{1 - e^{-s\tau}} \right|.
\end{equation}
Next, by following the same reasoning as in the proof of Theorem~\ref{thm:dist_to_cut} (see Appendix~\ref{app:dist_to_cut}), the right-hand side of \eqref{eq:lemma_real_part_1/2_proof} is bounded from below by $\varepsilon/2$. Moreover, if $\real(T_0(\infty)) \geq 1/2$ then for any $\delta > 0$ we can find a point $s^* \in \mC_+$ such that $|\imag(s^*)| \leq \pi$ and $1/(1 - e^{-s^*\tau}) = T_0(\infty) + \delta$. Now, consider the sequence $(s^* + i2 \pi m)_m$. As $m \to \infty$, $T_0(s^* + i2 \pi m) \to T_0(\infty)$ and thus the right-hand side of \eqref{eq:lemma_real_part_1/2_proof} converges to $\delta$. This contradicts \eqref{eq:lemma_real_part_1/2_proof}, as $\delta \geq \varepsilon/2$ is not true for all $\delta > 0$ and fixed $\varepsilon > 0$. Since $\tau > 0$ was arbitrary, there can be no $\tau > 0$ such that $|1 - T_0(s)(1 - e^{-s\tau})| \geq \varepsilon$ for all $s \in \mC_+$ for some fixed $\varepsilon > 0$. This proves the lemma.
\qed

\subsection{Proof of Theorem~\ref{thm:approximation}}
\label{app:approximation}

We first prove the following lemma.

\begin{lemma}\label{lem:sigma_convergence}
Given $\sigma$ and $(\sigma_\ell)_{\ell = 1}^{\infty}$ as in Theorem~\ref{thm:approximation}, then $\sigma_\ell\to \sigma$ and $(\sigma_\ell)^{-1}\to (\sigma)^{-1}$ in $\HinfC$  as $\ell\to \infty$.
\end{lemma}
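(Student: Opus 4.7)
The plan is to prove the first convergence by showing that the ratio $G_\ell := \sigma_\ell/\sigma$ tends to $1$ in $\HinfC$, and then to deduce the second from the first. Since $\sigma$ is outer with $|\sigma|$ bounded above and below, $\sigma^{-1}\in\HinfC$, hence $G_\ell\in\HinfC$. The identity $\sigma_\ell-\sigma=\sigma(G_\ell-1)$ together with $\|\sigma\|_{\Hinf}<\infty$ will give $\sigma_\ell\to\sigma$ in $\HinfC$ once $G_\ell\to 1$ in $\HinfC$ is established. The conclusion for the inverses follows in the same way from $\sigma_\ell^{-1}-\sigma^{-1}=\sigma^{-1}(G_\ell^{-1}-1)$, using that $G_\ell^{-1}=\sigma/\sigma_\ell$ lies in $\HinfC$ for all $\ell$ large (since \eqref{eq:sigma1} combined with the lower bound on $|\sigma|$ makes $|\sigma_\ell|$ uniformly bounded away from zero).

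To prove $G_\ell\to 1$ in $\HinfC$, note that $G_\ell-1\in\HinfC$, so by the maximum modulus principle it suffices to show $G_\ell(i\omega)\to 1$ uniformly in $\omega\in\mR$. Writing $G_\ell(i\omega)=\rho_\ell(\omega)e^{i\theta_\ell(\omega)}$, hypothesis \eqref{eq:sigma1} and the two-sided bounds on $|\sigma|$ give $\rho_\ell\to 1$ in $\Linf(\mR)$; the harder step is $\theta_\ell\to 0$ uniformly. Because $G_\ell$ is outer, \eqref{eq:outerrepr} identifies $\theta_\ell$ with a modified Hilbert transform of $h_\ell:=\log|\sigma_\ell|-\log|\sigma|$, so the task reduces to showing that this modified Hilbert transform converges to $0$ uniformly on $\mR$.

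This last step is the main obstacle and is where the derivative bounds \eqref{eq:derbound} enter. From \eqref{eq:derbound}, together with the uniform upper and lower bounds on $|\sigma|^2$ and $|\sigma_\ell|^2$ (valid for $\ell$ large), one derives a uniform bound $\|h_\ell'\|_{\Ltwo(\mR,(1+\omega^2)d\omega)}\le C$. In particular $\|h_\ell'\|_{\Ltwo(\mR)}\le C$, so each $h_\ell$ is Hölder continuous of exponent $1/2$ on $\mR$ with uniform constant. Combined with $\|h_\ell\|_{\Linf}\to 0$, the elementary interpolation $|h_\ell(\omega)-h_\ell(\omega')|\le \min\bigl(2\|h_\ell\|_{\Linf},\,C|\omega-\omega'|^{1/2}\bigr)$ shows that $h_\ell\to 0$ in every Hölder space of exponent $\alpha\in(0,1/2)$. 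Standard boundedness of the Hilbert transform on Hölder spaces then yields $\theta_\ell\to 0$ uniformly on $\mR$; the weight $(1+\omega^2)$ in \eqref{eq:derbound} is used to ensure that the additive constant in the modified Hilbert transform (of the form $\int h_\ell(\omega)\,\omega/(1+\omega^2)\,d\omega$) is well-defined and also tends to zero. The obstacle here is exactly that the Hilbert transform is a priori unbounded from $\Linf$ to $\Linf$, so the $\Linf$-convergence of $h_\ell$ by itself would not suffice; \eqref{eq:derbound} is precisely what supplies the Hölder equicontinuity needed to upgrade $\Linf$-convergence of the moduli into $\Linf$-convergence of the arguments, and hence of $G_\ell-1$ itself.
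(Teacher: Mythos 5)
Your reductions are mostly sound: passing to $G_\ell=\sigma_\ell/\sigma$, the maximum-modulus reduction to the boundary, the uniform convergence of the moduli, the uniform H\"older-$\tfrac12$ equicontinuity of $h_\ell=\log|\sigma_\ell|-\log|\sigma|$ from the $\Ltwo$ derivative bound, and the way you derive the statement for the inverses are all fine. The gap is the final step, where you claim that ``standard boundedness of the Hilbert transform on H\"older spaces'' upgrades $h_\ell\to0$ in $\Linf(\mR)\cap\dot C^{\alpha}(\mR)$ to $\theta_\ell\to0$ uniformly on $\mR$. That mapping property controls only H\"older \emph{seminorms}; on the non-compact line it gives no sup-norm control of the conjugate function, and the implication you need is in fact false. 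Concretely, take $\sigma\equiv1$ and $|\sigma_\ell(i\omega)|=e^{\epsilon_\ell\tanh\omega}$ with $\epsilon_\ell\to0$: then $h_\ell=\epsilon_\ell\tanh\omega$ has $\|h_\ell\|_{\Linf}\to0$, Lipschitz seminorm tending to $0$, and even $\|h_\ell'\|_{\Ltwo(\mR,(1+\omega^2)d\omega)}\to0$, yet the phase of the associated outer function behaves like $-\tfrac{2\epsilon_\ell}{\pi}\log|\omega|$ as $|\omega|\to\infty$ (compare $s^{-2i\epsilon_\ell/\pi}$, the outer function whose boundary modulus is $e^{\epsilon_\ell\,\mathrm{sign}\,\omega}$), so $\theta_\ell$ is unbounded and $\sup_\omega|\sigma_\ell(i\omega)-\sigma(i\omega)|$ stays near $2$. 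The step you treat as routine is therefore exactly the hard part of the lemma, and as written your argument would prove a false mapping property of conjugation on $\mR$.

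What is missing is control of $h_\ell$ \emph{across} $\omega=\infty$, and this is the true role of the weight in \eqref{eq:derbound} — not, as you suggest, merely to make an additive constant well defined. Under $\omega=\tan(\theta/2)$ the weighted bound is precisely a uniform $\Ltwo((-\pi,\pi])$ bound on the derivative of the densities transferred to the unit circle, i.e.\ uniform H\"older-$\tfrac12$ control on the \emph{compact} circle including the point representing $\omega=\pm\infty$ (which in particular forces matching limits of $h_\ell$ at $\pm\infty$ — the feature the $\tanh$ example violates). On the circle, Privalov's theorem does bound the conjugate in $C^\alpha(\mT)\subset\Linf$, and that is in substance what the paper does: it normalizes $\arg\sigma_\ell(1)=\arg\sigma(1)$, invokes Anderson's theorem on continuity of discrete-time spectral factorization in $\Linf$ under a uniform $\Ltwo$ bound on the derivative of the spectral densities, transfers it to $\mC_+$ via $z\mapsto(1-z)/(1+z)$, and then gets $(\sigma_\ell)^{-1}\to\sigma^{-1}$ from the lower bound on $|\sigma|$, just as you do. To repair your direct proof you would have to carry out this transfer to the circle (or otherwise control the phase contribution coming from a neighbourhood of $\omega=\infty$), at which point you are essentially reproving the cited factorization-continuity theorem rather than quoting a standard H\"older estimate for the Hilbert transform on the line.
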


\begin{proof}
Without loss of generality we may assume that the phases of all $\sigma_\ell$ are shifted so that $\arg(\sigma_\ell(1))=\arg(\sigma(1))$. Next, from \cite[Thm.~2]{anderson1985continuity} the discrete time spectral factorization operator is continuous in $\Linf((-\pi, \pi])$ if there is a uniform bound on the $\Ltwo((-\pi, \pi])$-norm of the derivative of the spectral densities. Translating the domain via $z\to (1-z)/(1+z)$ into the continuous setting (cf. \cite[p.~122]{hoffman1962banach}) the corresponding condition is \eqref{eq:derbound}, i.e., a bound on the derivative in the weighted norm $\Ltwo(\mR,(1+\omega^2)d\omega)$. Thus, if the bounds \eqref{eq:derbound} holds, then we have that $\sigma_\ell\to \sigma$ in $\HinfC$  as $\ell\to \infty$. Moreover, since $|\sigma(i\omega)|$ is bounded away from zero on $\mR$, we have that $\sigma(s)$ is bounded away from zero on $\bar{\mC}_+$ and hence $(\sigma_\ell)^{-1}\to (\sigma)^{-1}$ in $\HinfC$.
\end{proof}

Note that $T=a/\sigma$ and $T_\ell=a_\ell/\sigma_\ell$ where $a,a_\ell \in \script{K}$ for all $\ell$. Moreover, since $T$ and $T_\ell$ satisfies \eqref{eq:opt2b}, $a$ and $a_\ell$ satisfy
\begin{subequations}\label{eq:a}
\begin{align}
a(s_k)&=w_k\sigma(s_k), \mbox{ for } k=0, \ldots, n, \label{eq:a1}\\
a_\ell(s_k)&=w_k\sigma_\ell(s_k),\mbox{ for } k=0, \ldots, n, \label{eq:a2}
\end{align}
\end{subequations}
for all $\ell$. The equations in \eqref{eq:a1} defines a system of equations
for the set of coefficients $(\alpha_k)_{k = 0}^{n}$ since $a$ is of the form \eqref{eq:coinvariant}. Since the equations are linearly independent they uniquely specify such $a$. Similarly, for each $\ell$, the set of equations \eqref{eq:a2} are linearly independent and thus uniquely specify  $a_\ell$ via the coefficients $(\alpha_k^\ell)_{k = 0}^{n}$. By Lemma~\ref{lem:sigma_convergence}, $\sigma_\ell \to \sigma$ in $\HinfC$ as $\ell \to \infty$, and in particular we have that $\sigma_\ell(s_k)\to \sigma(s_k)$ for $k=0,1,\ldots, n$. Therefore, the right hand side of \eqref{eq:a2} converges to that of \eqref{eq:a1} as $\ell \to \infty$, and hence $\alpha_k^\ell\to \alpha_k$ for $k=0,\ldots, n$. Thus, $a_\ell \to a$ in $\HinfC$ as $\ell \to \infty$. Finally, using this together with Lemma~\ref{lem:sigma_convergence} gives that $T_\ell=a_\ell/\sigma_\ell\to a/\sigma=T$ in $\HinfC$ as $\ell\to \infty$.
\qed


\balance

\bibliographystyle{plain}
\bibliography{ref}

\end{document}